\newtheoremstyle{slthm}
  {9pt}
  {5pt}
  {\slshape}
  {}
  {\bfseries}
  {.}
  {.5em}
  {\thmname{#1} \thmnumber{#2}{\rm \thmnote{ (#3)}}}
\newtheoremstyle{prcl}
  {9pt}
  {5pt}
  {\slshape}
  {}
  {\bfseries}
  {.}
  {.5em}
  {\thmname{#3} \thmnumber{ #2}}
\theoremstyle{slthm}
\newtheorem{theorem}{Theorem}[section]
\newtheorem{lemma}[theorem]{Lemma}
\newtheorem{proposition}[theorem]{Proposition}
\theoremstyle{definition}
\newtheorem{definition}[theorem]{Definition}
\theoremstyle{remark}
\newtheorem{notationConvention}[theorem]{Notation and Conventions}
\newtheorem{remark}[theorem]{Remark}
\numberwithin{equation}{section}
\newcommand{\A}{\mathcal{A}}
\newcommand{\C}{\mathcal{C}}
\newcommand{\NN}{\mathbb{N}}
\newcommand{\QQ}{\mathbb{Q}}
\newcommand{\RR}{\mathbb{R}}
\newcommand{\CC}{\mathbb{C}}
\newcommand{\tld}[1]{\widetilde{#1}}
\newcommand{\Restr}[1]{\big|_{#1}}
\newcommand{\PD}[3]{\frac{\partial^{#1}#2}{\partial {#3}^{#1}}}
\newcommand{\PDn}[3]{\frac{\partial^{|#1|}#2}{\partial {#3}^{#1}}}
\DeclareMathOperator{\cl}{cl}
\DeclareMathOperator{\vol}{vol}
\author[Cluckers]{Raf Cluckers}
\address{Universit\'e Lille 1, Laboratoire Painlev\'e, CNRS - UMR 8524, Cit\'e Scientifique, 59655
Villeneuve d'Ascq C'edex, France, and, Katholieke Universiteit Leuven, Departement wiskunde,
Celestijnenlaan 200B, B-3001 Leu\-ven, Bel\-gium.
}
\email{raf.cluckers@wis.kuleuven.be}
\urladdr{http://www.wis.kuleuven.be/algebra/Raf/}
\author[Miller]{Daniel~J.~Miller}
\address{Emporia State University, Department of Mathematics, Computer Science and Economics, 1200 Commercial Street, Campus Box 4027, Emporia, KS 66801, U.S.A.}
\email{dmille10@emporia.edu}
\thanks{During the preparation of this paper, the research of the first author has been partially supported
by the Fund for Scientific Research – Flanders (G.0415.10), Belgium, and the research
of the second author has been partially supported by the National Science Foundation (award
number 1101248), USA}
\subjclass[2010]{Primary 42B20, 42A38, 32B20; Secondary 03C64}
\keywords{Oscillatory integrals; Fourier transforms; Subanalytic functions; Constructible functions}
\title[Decay of Oscillatory Integrals]{Bounding the decay of oscillatory integrals with a constructible amplitude function and a globally subanalytic phase function}
\begin{document}
\maketitle

\begin{abstract}
We call a function \emph{constructible} if it has a globally subanalytic domain and can be expressed as a sum of products of globally subanalytic functions and logarithms of positively-valued globally subanalytic functions.  Our main theorem gives uniform bounds on the decay of parameterized families of oscillatory integrals with a constructible amplitude function and a globally subanalytic phase function, assuming that the amplitude function is integrable and that the phase function satisfies a certain natural condition called the hyperplane condition.  As a simple application of this theorem, we also show that any continuous, integrable, constructible function of a single variable has an integrable Fourier transform.
\end{abstract}

\section{Introduction}\label{s:intro}

The classical Riemann-Lebesgue lemma states that the Fourier transform, $\hat{f}$, of a Lebesgue integrable function $f$ tends to $0$ at $\infty$.  If additional assumptions are imposed upon $f$, bounds for the decay rate of $\hat{f}$ at $\infty$ can be given.  One simple reason the decay rate of $\hat{f}$ at $\infty$ is of interest is that it is closely related to the question of whether $\hat{f}$ is Lebesgue integrable.  And in turn, one reason the integrability of $\hat{f}$ is of interest is that when $\hat{f}$ is integrable, we can recover the original function $f$ almost everywhere by taking the inverse Fourier transform of $\hat{f}$, with the inverse Fourier transform being defined simply as a Lebesgue integral.

Questions about decay rates of Fourier transforms have been studied in the wider context of oscillatory integrals, by which we shall mean any function $F:\RR^n\to\CC$ of the form
\[
F(z) = \int_{\RR^m} f(y) e^{iz\cdot \phi(y)} dy,
\]
where $i=\sqrt{-1}$, $y = (y_1,\ldots,y_m)$ and $z = (z_1,\ldots,z_n)$ are tuples of variables, $dy$ refers to the Lebesgue measure on $\RR^m$, the $\cdot$ denotes the Euclidean inner-product on $\RR^n$, and the {\bf\emph{amplitude function}} $f:\RR^m\to\RR$ and {\bf\emph{phase function}} $\phi:\RR^m\to\RR^n$ are Lebesgue measurable, with $f$ being Lebesgue integrable.  An extensive theory of oscillatory integrals has been developed both over the reals (see e.g. Arnold, Gusein-Zade and Varchenko's book \cite{AGZV88} and Stein's book \cite{Stein}) and also over the $p$-adics (see e.g. Igusa's book \cite{Igusa3} and the articles
Cluckers \cite{Cluckers2004} \cite{CCorput}, Denef \cite{Denef1991}, and Lichtin \cite{Lichtin}).  Our work here is conducted over the reals, and it is most directly inspired by a theorem of Stein (see Theorem 2 in Chapter VIII of Stein \cite{Stein}, found on page 351; Stein in turn references the preprint \cite{Bjork1973} by Bj\"{o}rk), and by analoguous $p$-adic results in \cite{Cluckers2004} and \cite{CCorput}.  Stein's theorem states that if $f$ is a $C^\infty$ function with compact support and if $\phi$ is a $C^\infty$ function that is of ``finite type'' on the support of $f$ (which is a certain condition on the partial derivatives of $\phi$), then there exist positive constants $c$ and $p$ such that
\begin{equation}\label{eq:decayBndStein}
|F(z)|\leq c\|z\|^{-p}
\end{equation}
for all nonzero $z\in\RR^n$, where $\|z\|$ denotes the Euclidean norm of $z$.

Stein's proof of this theorem uses smooth partitions of unity and the compactness of the support of $f$ to reduce the problem to a local analysis of $\phi$, which then ultimately relies on the van der Corput lemma.  This method of reducing to a local analysis may seem like a very natural technique from the viewpoint of real analysis, but since $f$ is required to be both smooth and have compact support, it renders the theorem inapplicable to many functions of interest in real analytic geometry and o-minimal structures.  For example, if $f$ happens to be semialgebraic, then requiring $f$ to be $C^\infty$ would mean that $f$ is in fact analytic, and then additionally requiring $f$ to have compact support would mean that $f$ is identically zero, which is a case void of any interest.  Stein does point out that actually $f$ need only be sufficiently differentiable for the proof to go through, but this still severely limits the theorem's applicability to many functions $f$ of interest in real analytic geometry, which very often do not have compact support.  However, because this critique of the theorem is solely due to its lack of applicability in certain natural o-minimal contexts, it makes sense to try to adapt the theorem to situations where $f$ and $\phi$ are assumed to be constructed from functions definable in certain o-minimal structures, for then one has a whole new set of tools with which to work.  That is precisely what we do in this paper, and as is common in o-minimality, our method easily adapts to give a parameterized version of the theorem with little additional effort.  So in addition to the coordinates $y = (y_1,\ldots,y_m)$ and $z = (z_1,\ldots,z_n)$, let us also consider parameter variables $x = (x_1,\ldots,x_k)$ varying over a set $X\subset\RR^k$.

\subsection{The Main Results}
Stein's finite type assumption on the phase function will be replaced by the following property in our context.

\begin{definition}\label{def:hypCond}
We say that a Lebesgue measurable function $\phi:X\times\RR^m\to\RR^n$ {\bf\emph{satisfies the hyperplane condition over $X$}} if for every $x\in X$ and every hyperplane $H$ in $\RR^n$, the set $\{y\in\RR^m : \phi(x,y)\in H\}$ has Lebesgue measure zero.
\end{definition}

The general o-minimal framework is too general for our purposes here, and we focus on functions that naturally arise in subanalytic geometry. We call a function {\bf\emph{constructible}} if it has a globally subanalytic domain and can be expressed as a sum of products of globally subanalytic functions and logarithms of positively-valued globally subanalytic functions (see Definitions \ref{def:subanalytic} and \ref{def:constructible}).  The following theorem is our main result.

\begin{theorem}\label{thm:decayBnd}
Let $f:X\times\RR^m\to\RR$ be a constructible function such that $f(x,\cdot):y\mapsto f(x,y)$ is Lebesgue integrable on $\RR^m$ for all $x\in X$, and let $\phi:X\times\RR^m\to\RR^n$ be a globally subanalytic function which satisfies the hyperplane condition over $X$.  Define $F:X\times\RR^n\to\CC$ by
\begin{equation}\label{eq:oscInt}
F(x,z) = \int_{\RR^m} f(x,y) e^{i z\cdot \phi(x,y)}dy.
\end{equation}
Then there exist a constant $p > 0$ and a globally subanalytic function $g:X\to(0,\infty)$ such that for all $(x,z)\in X\times\RR^n$ with $z\neq 0$,
\begin{equation}\label{eq:decayBnd}
|F(x,z)| \leq g(x)\|z\|^{-p}.
\end{equation}
\end{theorem}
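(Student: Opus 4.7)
My plan is to combine a parametric cell decomposition of the subanalytic and constructible data with the classical van der Corput estimate applied in the innermost integration variable, arguing by induction on $m$. First I would decompose $z = t\theta$ with $t = \|z\|$ and $\theta \in S^{n-1}$, and set $\psi(x,y,\theta) := \theta \cdot \phi(x,y)$, a globally subanalytic function on $X \times \RR^m \times S^{n-1}$. The hyperplane condition then says exactly that, for each fixed $(x,\theta)$, the function $y \mapsto \psi(x,y,\theta)$ takes no value on a set of positive Lebesgue measure. It suffices to produce a globally subanalytic $g : X \to (0,\infty)$ and an exponent $p > 0$ such that
\[
\Big|\int_{\RR^m} f(x,y)\, e^{i t \psi(x,y,\theta)}\,dy\Big| \leq g(x)\, t^{-p}
\]
uniformly in $\theta \in S^{n-1}$ and $t > 0$.

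For the inductive step I would apply a preparation theorem for constructible functions, in the spirit of Lion--Rolin and Cluckers--Miller, simultaneously to $f$ and $\psi$, treating the last variable $y_m$ as the active variable and $(x, y_1, \ldots, y_{m-1}, \theta)$ as parameters. This partitions the source into finitely many globally subanalytic cells on which $f$ and $\psi$ assume sums of products of monomials $(y_m - c)^a$, logarithms $\log^b(y_m - c)$, and analytic units bounded between positive constants. On each cell, after pulling out the $y_m$-independent part of $\psi$ as a phase factor of modulus $1$, the remaining phase has the form $A\cdot(y_m - c)^r \cdot U$ with $U$ a unit. The hyperplane condition rules out $A \equiv 0$ on any positive-measure set, so $\psi$ is strictly monotone in $y_m$ throughout the cell. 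A change of variables $u = \psi(x, y_m, y', \theta)$, where $y' = (y_1, \ldots, y_{m-1})$, converts the $y_m$-integral into one of the form $\int h(u)\, e^{itu}\,du$ with $h$ a constructible function of $u$ and the remaining parameters, and a standard integration by parts (or van der Corput's lemma) yields a bound of the form $t^{-1/r}$ times an integrable constructible function of $(x, y', \theta)$.

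The remaining integration over $y' \in \RR^{m-1}$ is then handled by the closure of the constructible class under parametric integration, producing a constructible majorant in $(x, \theta)$. Taking its supremum in $\theta$ over the compact subanalytic set $S^{n-1}$ yields the required $g(x)$, while the exponent $p$ emerges as the reciprocal of the largest $r$ appearing among the finitely many preparation cells.

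The main obstacle I anticipate is uniformity: one must set up the preparation so that the unit $U$ is bounded away from $0$ and $\infty$ on the entire cell rather than only asymptotically, so that the change of variables is globally valid and monotonicity is strict. A secondary subtlety is controlling the logarithmic factors in $f$ under repeated integration by parts, since these tend to accumulate; here the closure of the constructible category under differentiation and integration is crucial, so that all intermediate bounds remain inside the subanalytic or constructible framework throughout the induction.
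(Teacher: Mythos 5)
Your overall architecture --- polar decomposition $z=t\theta$, preparation of $f$ relative to a distinguished last variable, van der Corput in that variable, then parametric integration over the remaining variables --- matches the skeleton of the paper's argument. But there are two genuine gaps, each of which is fatal as the proposal stands.

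First, the hyperplane condition does not give you monotonicity, or even non-constancy, of $\psi(x,\cdot,\theta)$ in the fixed coordinate $y_m$ on a cell. It only says that $y\mapsto\theta\cdot\phi(x,y)$ is not constant on a positive-measure subset of $\RR^m$; the phase may perfectly well be independent of $y_m$ on a full-dimensional cell while varying in $y_{<m}$, in which case your $y_m$-dependent factor $A$ vanishes identically there and the inner integral produces no decay. The direction in which the phase genuinely varies (to some finite order) changes from point to point, so you cannot fix $y_m$ as the active variable once and for all. The paper resolves this with Stein's device: the hyperplane condition plus analyticity gives, at each point, some nonvanishing derivative $\xi\cdot\partial^\alpha\phi/\partial y^\alpha$; Miller's theorem on polynomially bounded o-minimal structures bounds $|\alpha|$ uniformly by some $N$; and a basis of homogeneous polynomials of the form $(v\cdot y)^d$ converts this into a finite family of directional derivatives $(v_{d,j}\cdot\nabla)^d\phi$, one of which is nonzero at each point. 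One then rotates coordinates separately for each $(d,j)$ so that $v_{d,j}$ becomes the last coordinate, and applies van der Corput of order $d$ there. Your proposal has no substitute for this step.

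Second, even where the phase does depend on the active variable, you have no quantitative lower bound on its derivative: the hyperplane condition is purely qualitative, and $\partial\psi/\partial y_m$ can be arbitrarily small on sets of substantial measure. Van der Corput (or your change of variables $u=\psi$, whose Jacobian is $1/|\partial\psi/\partial y_m|$) then yields nothing uniform. The paper's way out is to introduce the subanalytic function $M(x,y)=\min_{\xi\in S}\max_{(d,j)}|\xi\cdot(v_{d,j}\cdot\nabla)^d\phi(x,y)|$, restrict the van der Corput argument to the region $\{M>\lambda^{-1/4}\}$ (so the relevant derivative is bounded below by $\lambda^{-1/4}$, costing only a factor $\lambda^{1/(4N)}$ against the gain $\lambda^{-1/d}$), and show that the excised region shrinks to a null set as $\lambda\to\infty$, so that $\int|f|$ over it decays like $\lambda^{-p}$ by the decay property of constructible functions (Lemma \ref{lemma:paramIntegDecay}, resting on Proposition \ref{thm:decayBound}). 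Without some version of this good-set/bad-set splitting, your estimate on each cell cannot be made uniform, and the advertised exponent $p=1/r$ is not attainable --- indeed the paper's exponent is necessarily inexplicit, degraded both by $N$ and by the unnamed exponent coming from the bad-set estimate.
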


The hyperplane condition is easily seen to be a necessary assumption in Theorem \ref{thm:decayBnd} in the sense that if $\phi$ does not satisfy the hyperplane condition, then there exist a constructible function $f$ and a choice of parameter $x\in X$ such that $|F(x,\cdot)|$ is constant and nonzero on an unbounded set, in which case a bound of the form \eqref{eq:decayBnd} is impossible (see Remark \ref{rmk:decayBnd}).

Although Theorem \ref{thm:decayBnd} and Stein's theorem are both variants of the same theme, there is a certain trade-off between the two statements.  At the cost of assuming that $f$ and $\phi$ belong to very special model-theoretically defined classes of functions, Theorem \ref{thm:decayBnd} is able to significantly relax the smoothness properties of $f$ and $\phi$ assumed in Stein's theorem.  In particular, note that in Theorem \ref{thm:decayBnd},  $f$ and $\phi$ need not be continuous, let alone $C^\infty$, and their supports and ranges need not be bounded.   Even so, one should be aware that the fact that $f$ and $\phi$ are respectively assumed to be constructible and globally subanalytic automatically imposes on them a whole host of special analytic properties, such as being piecewise analytic functions and having simple asymptotic behavior at the boundaries of these pieces, and these special properties make Theorem \ref{thm:decayBnd} possible.

Another point of comparison between the two theorems concerns the value of $p$.  Stein's theorem uses the finite type assumption on the phase function to actually give a specific value of $p$ for which \eqref{eq:decayBndStein} holds, and this value of $p$ works for any choice of smooth, compactly supported amplitude function. (The choice of amplitude function only affects the value of the constant $c$.)  In contrast, Theorem \ref{thm:decayBnd} only proves the \emph{existence} of some $p>0$ for which \eqref{eq:decayBnd} holds without ever naming any particular value of $p$ that works. Nevertheless, the fact that Theorem \ref{thm:decayBnd} is a simply stated result about the classes of constructible and globally subanalytic functions as a whole can make up for its lack of explicitness regarding the value of $p$, and as a demonstration of this utility we prove the following result as an easy consequence of Theorem \ref{thm:decayBnd}.

\begin{theorem}\label{thm:integFT}
If a constructible function $f:\RR\to\RR$ is Lebesgue integrable and continuous, then the Fourier transform of $f$ is Lebesgue integrable.
\end{theorem}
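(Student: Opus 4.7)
The plan is to apply Theorem \ref{thm:decayBnd} to both $f$ and its derivative, combining them via integration by parts. With $m = n = 1$ and no parameter, the phase $\phi(y) = y$ satisfies the hyperplane condition trivially (hyperplanes in $\RR$ are points), so Theorem \ref{thm:decayBnd} yields constants $p, C > 0$ with $|\hat f(z)| \le C |z|^{-p}$ for all $z \neq 0$. Since $\hat f$ is continuous and bounded by $\|f\|_1$, this already gives integrability on $[-1,1]$; however, integrability on $\{|z| \ge 1\}$ requires $p > 1$, which Theorem \ref{thm:decayBnd} does not guarantee on its own.

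To boost the decay rate, I would first verify that the pointwise a.e.\ derivative $f'$ is constructible, lies in $L^1(\RR)$, and that $f(y) \to 0$ as $|y| \to \infty$. Constructibility of $f'$ follows from the fact that constructible functions are closed under differentiation where defined, and that $f$ is piecewise $C^\infty$ off a finite set $\{a_1, \dots, a_N\}$. The integrability of $f'$ is the substantive point and uses the preparation theorem for constructible functions: near each break point $a_k$, $f$ has an asymptotic expansion in monomials $(y - a_k)^\alpha (\log(y - a_k))^\beta$ with $\alpha$ in a discrete set of reals and $\beta \in \ZZ_{\ge 0}$; continuity of $f$ at $a_k$ forces every non-constant leading term to have $\alpha > 0$ (terms with $\alpha = 0$ and $\beta > 0$ would blow up, and negative-$\beta$ log terms never arise since constructible functions are polynomial in $\log$). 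Thus $f'$ behaves like $(y - a_k)^{\alpha - 1}$ times powers of $\log$ with $\alpha > 0$, which is integrable across $a_k$. The same preparation at infinity, combined with $f \in L^1$, forces $f$ to vanish at $\pm\infty$ at a rate $y^{-\gamma}(\log y)^\delta$ with $\gamma > 1$ or ($\gamma = 1$ and $\delta < -1$), so $f'$ decays strictly faster and is likewise integrable at infinity.

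Once $f' \in L^1(\RR)$ is in hand, integration by parts on each of the smooth pieces (with interior contributions cancelling by continuity of $f$, and contributions at $\pm\infty$ vanishing since $f \to 0$) gives
\[
iz \hat f(z) = \widehat{f'}(z) \qquad \text{for all } z \in \RR.
\]
Applying Theorem \ref{thm:decayBnd} to $f'$ in place of $f$ yields $|\widehat{f'}(z)| \le C'|z|^{-p'}$ for some $p' > 0$ and all $z \neq 0$, so $|\hat f(z)| \le C'|z|^{-p'-1}$. Since $p' + 1 > 1$, $\hat f$ is integrable on $\{|z| \ge 1\}$, and combined with the uniform bound near $0$ we conclude $\hat f \in L^1(\RR)$. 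The main obstacle is the $L^1$ bound on $f'$, which really leverages the tame asymptotic structure of constructible functions at their break points and at infinity; absent this structure, a continuous integrable function can easily have non-integrable derivative.
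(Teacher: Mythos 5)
Your proposal is correct and follows essentially the same route as the paper: integrate by parts over the finitely many intervals on which $f$ is smooth, use continuity and the decay of $f$ at $\pm\infty$ to kill the boundary terms, and then apply Theorem \ref{thm:decayBnd} to $f'$ (with phase $\phi(y)=y$, which trivially satisfies the hyperplane condition) to upgrade the decay of $\hat f$ to $|z|^{-1-p}$. The only divergence is in how you justify $f'\in L^1$ and $f\to 0$ at infinity: the paper gets both more cheaply from o-minimality alone, choosing the partition so that $f'$ has constant sign on each piece and invoking monotone convergence so that $\int_{a_j}^{a_{j+1}}|f'|=|f(a_{j+1}^-)-f(a_j^+)|$, whereas your appeal to preparation-theorem asymptotics at the break points and at infinity also works but is heavier and would additionally require ruling out cancellations among the prepared terms before reading off the exponent of the leading behavior.
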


Theorem \ref{thm:integFT} easily adapts to complex-valued functions as well, and the theorem is sharp in the sense that up to almost everywhere equivalence, the assumption that $f$ is continuous is a rather apparent necessary condition for the Fourier transform of $f$ to be integrable (see Remark \ref{rmk:integFT}).  An open question of interest to the authors is whether Theorem \ref{thm:integFT}, or some suitable variant of this result, is true for all multivariate constructible functions $f:\RR^m\to\RR$.

We hope that our work on oscillatory integrals in a subanalytic context may lead to a more general study of parameter oscillatory integrals in this context, in analogy to the $p$-adic and motivic study \cite{CLexp} following \cite{Cluckers2004}.

\subsection{The Method of Proof}
We conclude the Introduction by explaining how our proof of Theorem \ref{thm:decayBnd} relates to Stein's proof of his theorem.  Write $\lambda = \|z\|$.  Our proof constructs a certain subanalytic set $E\subset(1,\infty)\times X\times\RR^m$ such that for each $x\in X$, the functions $f(x,\cdot)$ and $\phi(x,\cdot)$ are well enough behaved on the fiber $E_{(\lambda,x)} := \{y\in\RR^m : (\lambda,x,y)\in E\}$ so that we may roughly follow the outline of Stein's proof of his theorem on $E_{(\lambda,x)}$, and such that the complementary set $\RR^m\setminus E_{(\lambda,x)}$ tends to a set of measure zero as $\lambda\to\infty$.  Although we have good control of $f(x,\cdot)$ and $\phi(x,\cdot)$ on $E_{(\lambda,x)}$, we also need to analyze our integrand on $\RR^m\setminus E_{(\lambda,x)}$.  To do this, we simply use the bound
\[
\left|\int_{\RR^m\setminus E_{(\lambda,x)}} f(x,y)e^{i z\cdot\phi(x,y)}dy\right| \leq \int_{\RR^m\setminus E_{(\lambda,x)}} |f(x,y)|dy
\]
and then use results from \cite{CluckersMiller01} and \cite{CluckersMiller03} to prove a bound of the form
\[
\int_{\RR^m\setminus E_{(\lambda,x)}} |f(x,y)|dy \leq g(x)\lambda^{-p},
\]
albeit with no control over the choice of $p$. This method is a mixture of ingredients of Stein's proof of Theorem 2 in Chapter VIII of \cite{Stein} with the method of \cite{Cluckers2004}, where a more stringent form of the hyperplane condition is used, namely a form of essential surjectivity (dominancy) of the phase function, for the analogous $p$-adic results. Since meanwhile the $p$-adic van der Corput Lemma has been achieved in sufficient generality in \cite{CCorput}, the results of \cite{Cluckers2004} can probably be generalized to the generality of the hyperplane condition with similar arguments as in this paper.

The choice of $p$ could, perhaps, be made more explicit in future research because the requisite results from \cite{CluckersMiller01} and \cite{CluckersMiller03} have rather explicit constructive proofs that could be analyzed more carefully, but this appears to be a nontrivial task at the present.  However, one thing is currently clear from simple examples: because we do not assume that the amplitude function is smooth, the choice of $p$ depends on both the amplitude function and the phase function, not just on phase function alone as in Stein's theorem.

\section{Constructible functions and their basic properties}\label{s:constr}

This section defines the notion of a constructible function and lists the properties of constructible functions that we shall need, many of which are quoted from results proven in \cite{CluckersMiller01} and \cite{CluckersMiller03}.\footnote{The reader interested in working through the supporting results in \cite{CluckersMiller01} and \cite{CluckersMiller03} may also find the simple expository paper \cite{CluckersMiller02} to be helpful.  This paper extends the main results of \cite{CluckersMiller01} through a much simpler argument than that given in \cite{CluckersMiller01}.  It also gives a simple single-variate version of the more general, but highly technical, multi-variate construction used to prove the preparation theorem of \cite{CluckersMiller03}.}
The section concludes with an asymptotic bound that is key to the proof of Theorem \ref{thm:decayBnd}.  We begin by fixing some notation and conventions to be used throughout the paper.

\begin{notationConvention}\label{nc:general}
Write $\NN = \{0,1,2,3\ldots\}$ for the set of natural numbers, and write $\log:(0,\infty)\to\RR$ for the natural logarithm.  For any set $A\subset\RR^m$, write $\cl(A)$ for the topological closure of $A$ in $\RR^m$.  For any function $f:\RR\to\CC$, write
\[
f(a^+) =
\begin{cases}
\displaystyle \lim_{x\to a^+} f(x),
    & \text{if $a\in\RR$,} \vspace*{5pt}\\
\displaystyle \lim_{x\to-\infty} f(x),
    & \text{if $a=-\infty$,}
\end{cases}
\quad
\text{and}
\quad
f(a^-) =
\begin{cases}
\displaystyle \lim_{x\to a^-} f(x),
    & \text{if $a\in\RR$,} \vspace*{5pt}\\
\displaystyle \lim_{x\to+\infty} f(x),
    & \text{if $a=+\infty$.}
\end{cases}
\]

The words {\bf\emph{measurable}} and {\bf\emph{integrable}} are always meant in the sense of the Lebesgue measure and the Lebesgue integral.  When $f:\RR\to\CC$ is integrable, define its Fourier transform $\hat{f}:\RR\to\CC$ and inverse Fourier transform $\check{f}:\RR\to\CC$ by
\[
\hat{f}(z) = \frac{1}{\sqrt{2\pi}}\int_{\RR} f(y) e^{-iyz}dy
\quad\text{and}\quad
\check{f}(z) = \frac{1}{\sqrt{2\pi}}\int_{\RR} f(y) e^{iyz}dy.
\]

Unless explicitly stated otherwise, we will write $x = (x_1,\ldots,x_k)$, $y = (y_1,\ldots,y_m)$ and $z = (z_1,\ldots,z_n)$ for the standard coordinates on $\RR^k$, $\RR^m$ and $\RR^n$, respectively, as was done in the Introduction.  Define the coordinate projection $\Pi_k:\RR^{k+m}\to\RR^k$ by $\Pi_k(x,y) = x$.  For any $A\subset\RR^{k+m}$ and $x\in\RR^k$, write
\[
A_x = \{y\in\RR^m : (x,y)\in A\}
\]
for the fiber of $A$ over $x$.  For any function $f:A\to\RR^l$ with $A\subset\RR^{k+m}$ and any $x\in\Pi_k(A)$, write $f(x,\cdot)$ for the function from $A_x$ to $\RR^l$ defined by $y\mapsto f(x,y)$.  For each $j\in\{1,\ldots,m\}$ and $\Box\in\{<,\leq,>,\geq\}$, write $y_{\Box j}$ for $(y_i)_{i\, \Box\, j}$.  For example, $y_{<j} = (y_1,\ldots,y_{j-1})$ and $y_{\leq j} = (y_1,\ldots,y_j)$.

Write $|\xi|$ for the absolute value of a real number $\xi$, respectively, the complex modulus of a complex number $\xi$.  For any $y = (y_1,\ldots,y_m)\in\RR^m$ and $\alpha = (\alpha_1,\ldots,\alpha_m)\in\RR^m$, write
\begin{eqnarray*}
|y|
    & = &
    (|y_1|,\ldots,|y_m|), \\
\|y\|
    & = &
    \sqrt{y_{1}^{2} + \cdots + y_{m}^{2}}, \\
\log y
    & = &
    (\log y_1,\ldots,\log y_m), \quad\text{provided that $y\in(0,\infty)^m$,} \\
|\alpha|
    & = &
    \alpha_1+\cdots+\alpha_m, \\
y^\alpha
    & = &
    y_{1}^{\alpha_1}\cdots y_{m}^{\alpha_m},
\end{eqnarray*}
with $y_{i}^{\alpha_i}$ defined provided that $y_i\geq 0$ if $\alpha_i$ is not an integer, and provided that $y_i\neq 0$ if $\alpha_i < 0$.  There is a conflict of notation between this use of $|y|$ and $|\alpha|$, but the context will always distinguish the meaning: if $\alpha$ is a tuple of exponents of a tuple of real numbers, then $|\alpha|$ means $\alpha_1+\cdots+\alpha_n$; if $y$ is a tuple of real numbers not used as exponents, then $|y|$ means $(|y_1|,\ldots,|y_n|)$.  These notations may be combined, such as with $|y|^\alpha = |y_1|^{\alpha_1}\cdots |y_n|^{\alpha_n}$ and $(\log|y|)^\alpha = (\log|y_1|)^{\alpha_1}\cdots(\log|y_n|)^{\alpha_n}$.

Suppose that $\{E_\lambda\}_{\lambda>a}$ is a family of subsets of a set $A\subset\RR^k$ with real parameter $\lambda>a$. Write
\[
\text{$E_\lambda\nearrow A$ as $\lambda\to\infty$}
\]
to mean that $E_{\lambda} \subset E_{\lambda'}$ whenever $\lambda<\lambda'$ and that $A = \bigcup_{\lambda>a} E_\lambda$.  For a set $B\subset A$, write
\[
\text{$E_\lambda\searrow B$ as $\lambda\to\infty$}
\]
to mean that $E_{\lambda} \supset E_{\lambda'}$ whenever $\lambda<\lambda'$ and that $B = \bigcap_{\lambda>a}E_\lambda$.  If $E_\lambda\nearrow A'$ for some $A'\subset A$ for which $A\setminus A'$ has measure zero, write
\[
\text{$E_\lambda\nearrow A$ a.e. as $\lambda\to\infty$,}
\]
where ``a.e.'' stands for ``almost everywhere''.  Likewise, if $E_\lambda\searrow B'$ for some $B'\supset B$ for which $B'\setminus B$ has measure $0$, write
\[
\text{$E_\lambda\searrow B$ a.e. as $\lambda\to\infty$.}
\]
\end{notationConvention}

\begin{definition}\label{def:analytic}
For any set $E\subset\RR^k$, call a function $f:E \to \RR^l$ {\bf\emph{analytic}} if it extends to an analytic function on a neighborhood of $E$ in $\RR^k$.  An analytic function $u:E\to\RR$ is called a {\bf\emph{unit}} if either $u(x)>0$ for all $x\in E$, or $u(x) < 0$ for all $x\in E$.
\end{definition}

\begin{definition}\label{def:subanalytic}
A {\bf\emph{restricted analytic function}} is a function $f:\RR^k\to\RR$ such that the restriction of $f$ to $[-1,1]^k$ is analytic and $f(x) = 0$ on $\RR^k\setminus[-1,1]^k$.  Call a set or function {\bf\emph{subanalytic}} if, and only if, it is definable in the expansion of the real field by all restricted analytic functions.
\end{definition}

Thus in this paper, the word ``subanalytic'' is an abbreviation for the phrase ``globally subanalytic'', and in this meaning, the natural logarithm $\log:(0,\infty)\to\RR$ is not subanalytic.

\begin{definition}\label{def:constructible}
For any subanalytic set $X\subset\RR^k$, let $\C(X)$ denote the $\RR$-algebra of functions on $X$ generated by the functions of the form $x\mapsto f(x)$ and $x\mapsto \log g(x)$, where $f:X\to\RR$ and $g:X\to(0,\infty)$ are subanalytic.
The functions in  $\C(X)$ are called the {\bf\emph{constructible functions on $X$}}.  If we simply call a function $f$ ``constructible'', we mean that $f$ has a subanalytic domain $X$ and that $f\in\C(X)$.
\end{definition}

\begin{theorem}[{\cite[Theorem 1.3]{CluckersMiller01}, Stability under integration}]\label{thm:stabInteg}
If $f\in\C(X\times\RR^m)$ for a subanalytic set $X\subset\RR^k$, and if $f(x,\cdot)$ is integrable on $\RR^m$ for all $x\in X$, then the function $F:X\to\RR$ defined by
\begin{equation}\label{eq:paramInt}
F(x) = \int_{\RR^m} f(x,y)\, dy
\end{equation}
is constructible.
\end{theorem}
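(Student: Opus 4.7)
The plan is to prove the theorem by induction on $m$, reducing the case of $m$ integration variables to $m-1$ by integrating out a single variable at a time. The essential content therefore lies in the base case $m=1$: given a constructible $f \in \C(X \times \RR)$ whose fiberwise integrals converge, show that $F(x) = \int_{\RR} f(x,y)\,dy$ is constructible on $X$.

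First I would invoke a subanalytic preparation theorem (in the style of the one in \cite{CluckersMiller03}) to decompose $X \times \RR$ into finitely many subanalytic cells over $X$ on each of which $f$ takes a prepared normal form with respect to the integration variable. A typical cell $A \subset X \times \RR$ fibers over $X$ as an open interval $\alpha(x) < y < \beta(x)$ between two subanalytic functions (possibly $\pm\infty$) or as a graph $y=\gamma(x)$; on such a cell, $f$ restricted to $A$ becomes a finite sum of terms of the form
\[
a(x)\,|y-\theta(x)|^{r}\,\bigl(\log|y-\theta(x)|\bigr)^{s}\,U(x,y),
\]
with $a \in \C(X)$, $r \in \QQ$, $s \in \NN$, $\theta$ subanalytic, and $U(x,y)$ a subanalytic unit whose analytic behavior is controlled on the closure of the cell (typically expandable in a convergent Puiseux-like series in a monomial of $y-\theta$).

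Next I would integrate each prepared term across its cell in $y$. The workhorse computation is that definite integrals of the shape $\int |y-\theta|^{r}(\log|y-\theta|)^{s}\,dy$ have explicit antiderivatives which are polynomials in $\log|y-\theta|$ with coefficients that are powers of $|y-\theta|$ (with a purely logarithmic contribution in the critical case $r=-1$). Evaluated at subanalytic endpoints $\alpha(x)$, $\beta(x)$ (or passing to a limit at $\pm\infty$, where integrability forces the divergent pieces to cancel), the result lies in $\C(X)$. The unit factor $U(x,y)$ is absorbed by expanding it in its prepared series and integrating term by term, with uniform convergence guaranteed by the preparation; the resulting infinite sum telescopes or matches a standard constructible form because the exponents in the expansion are controlled. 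Summing the contributions of the finitely many cells gives constructibility of $F$, and the inductive step follows by applying the $m=1$ case to the successive integrals $\int\cdots\int f(x,y_1,\ldots,y_m)\,dy_m\cdots dy_1$, using Fubini together with the fiberwise integrability hypothesis to justify the order of integration on each piece.

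The principal obstacle is the preparation theorem itself: producing a simultaneous normal form for $f$ with good control of the units $U(x,y)$ near the cell boundaries, uniformly in the parameter $x$, is technically heavy and is the main content imported from \cite{CluckersMiller03}. A secondary difficulty is the bookkeeping required to ensure that the integration step never escapes the constructible class — in particular, the $r=-1$ case must be handled separately to account for the emergence of extra logarithms, and one must check that integrability on $X$ propagates through to make the definite integrals well-defined on each piece. Once these ingredients are in place, the argument is essentially an explicit calculation plus induction.
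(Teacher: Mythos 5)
The paper does not prove this statement: it is imported verbatim as \cite[Theorem 1.3]{CluckersMiller01}, so there is no in-paper proof to compare against. Your outline does follow the strategy of the cited works (cell decomposition and preparation in the last variable, explicit integration of prepared terms, induction on $m$), so the architecture is right. But the sketch glosses over exactly the point where the theorem is hard.

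The genuine gap is the treatment of the unit $U(x,y)$. You propose to expand $U$ in its convergent series in monomials of $y-\theta(x)$ and integrate term by term, asserting that ``the resulting infinite sum telescopes or matches a standard constructible form.'' An \emph{infinite} sum of constructible (or subanalytic) functions is not constructible in general --- $\C(X)$ is only an $\RR$-algebra, closed under finite sums and products --- so this step is precisely where a naive argument fails. The actual proofs must show that the integrated series is again an analytic function of finitely many \emph{bounded} subanalytic monomials in the data (i.e., that the antiderivative of a prepared term is itself of prepared form, or an equivalent re-summation device), and that argument is the real content of \cite{CluckersMiller01} (see also the expository account in \cite{CluckersMiller02}); it cannot be dismissed as bookkeeping. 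Two secondary gaps: (i) you invoke cancellation of divergent pieces at infinite or singular endpoints, but integrability of the sum does not give integrability of the individual prepared terms, and evaluating divergent antiderivatives at endpoints and cancelling is not a licit operation --- the correct fix is the refined preparation in which each term $T_j$ is separately integrable (this is Theorem \ref{thm:constrPrep} of the present paper, itself a nontrivial strengthening from \cite{CluckersMiller03}); (ii) in the inductive step, after integrating out $y_m$ the partial integral may fail to converge for $(x,y_{<m})$ in a fiberwise measure-zero (subanalytic) set, so one must argue that this bad set can be discarded without affecting either constructibility or the value of the iterated integral before the induction hypothesis applies.
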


It is easy to express any constructible function $F:X\to\RR$ as an integral of the form \eqref{eq:paramInt} for some subanalytic function $f:X\times\RR^m\to\RR$, so Theorem \ref{thm:stabInteg} shows that the constructible functions form the smallest class of functions that contains the subanalytic functions and is stable under integration.

\begin{definition}\label{def:cell}
A set $A\subset\RR^{k+m}$ is called a {\bf\emph{cell over $\RR^k$}} if $A$ is subanalytic and for each $j\in\{1,\ldots,m\}$, either $\Pi_{k+j}(A)$ is the graph of an analytic subanalytic function on $\Pi_{k+j-1}(A)$ or
\[
\Pi_{k+j}(A) = \{(x,y_{\leq j}) : (x,y_{<j})\in\Pi_{m+j-1}(A), a_j(x,y_{<j})\,\, \Box_1 \,\, y_j \,\, \Box_2 \,\, b_j(x,y_{<j})\}
\]
for some analytic subanalytic functions $a_j,b_j:\Pi_{k+j-1}(A)\to\RR$ such that $a_j(x,y_{<j}) < b_j(x,y_{<j})$ for all $(x,y_{<j})\in\Pi_{k+j-1}(A)$, where $\Box_1$ and $\Box_2$ either denote $<$ or no condition.  We say that $A$ is {\bf\emph{open over $\RR^k$}} if the fiber $A_x$ is open in $\RR^m$ for all $x\in\Pi_k(A)$.
\end{definition}

\begin{definition}\label{def:center}
Suppose that $A\subset\RR^{k+m}$ is a cell over $\RR^k$ that is open over $\RR^k$.  For each $j\in\{1,\ldots,m\}$, suppose that we have an analytic subanalytic function $\theta_j:\Pi_{k+j-1}(A)\to\RR$ whose graph is disjoint from $\Pi_{k+j}(A)$, and write $\tld{y}_j = y_j - \theta_j(x,y_{<j})$.  In this situation, we call $\theta = (\theta_1,\ldots,\theta_m):A\to\RR^m$ a {\bf\emph{center for $A$ over $\RR^k$}}, and we call $(x,\tld{y}) = (x,\tld{y}_1,\ldots,\tld{y}_m)$ the {\bf\emph{coordinates for $A$ with center $\theta$}}.\footnote{The notion of a ``center for $A$ over $\RR^k$'' is also found in \cite{CluckersMiller03}, but with additional technical conditions that we are omitting here because we have no need for them.}
\end{definition}

\begin{definition}\label{def:ratMonMap}
Suppose that $(x,\tld{y})$ are the coordinates for $A\subset\RR^{k+m}$ with center $\theta$.  A {\bf\emph{rational monomial map on $A$ over $\RR^k$ with center $\theta$}} is a bounded function $\varphi:A\to(0,\infty)^M$ of the form
\begin{equation}\label{eq:ratMonMap}
\varphi(x,y) = (c_1(x)|\tld{y}|^{\mu_1},\ldots,c_M(x)|\tld{y}|^{\mu_M}),
\end{equation}
where $c_1,\ldots,c_M:\Pi_k(A)\to(0,\infty)$ are analytic subanalytic functions and $\mu_1,\ldots,\mu_M\in\QQ^m$.  We call $f:A\to\RR$ a {\bf\emph{$\varphi$-function}} if $f = F\circ\varphi$ for some analytic function $F:\cl(\varphi(A))\to\RR$.  If $F$ is also a unit, then we call $f$ a {\bf\emph{$\varphi$-unit}}.
\end{definition}

\begin{theorem}[Preparation of constructible functions]
\label{thm:constrPrep}
Let $f\in\C(X\times\RR^m)$ for a subanalytic set $X\subset\RR^k$, and assume that $f(x,\cdot)$ is integrable on $\RR^m$ for all $x\in X$.  Then there exists a finite partition $\A$ of $X\times\RR^m$ into subanalytic cells over $\RR^k$ such that for each $A\in\A$ which is open over $\RR^k$, there exists a rational monomial map $\varphi$ on $A$ over $\RR^k$, say with center $\theta$, for which we can express $f$ as a finite sum
\begin{equation}\label{eq:constrPrepSum}
f(x,y) = \sum_{j}T_j(x,y)
\end{equation}
on $A$, where for each $j$, the function $T_j(x,\cdot)$ is integrable on $A_x$ for all $x\in\Pi_k(A)$, the function $T_j$ has a constant positive or negative sign on $A$, and
\begin{equation}\label{eq:constrPrepTerm}
T_j(x,y) = g_j(x)|\tld{y}|^{\alpha_j}(\log|\tld{y}|)^{\beta_j} u_j(x,y)
\end{equation}
for some $g_j\in\C(\Pi_k(A))$, tuples $\alpha_j\in\QQ^m$ and $\beta_j\in\NN^m$, and $\varphi$-unit $u_j$, where $(x,\tld{y})$ are the coordinates for $A$ with center $\theta$.

In addition, for each $j$, the fact that $T_j$ is integrable in $y$ is determined solely by the value of $\alpha_j$, being independent of the value of $\beta_j$ in the following sense: for each $x\in\Pi_m(A)$, any function of the form $y\mapsto |\tld{y}|^{\alpha_j}(\log|\tld{y}|)^{\beta'_j}$ is integrable on $A_x$, where $\alpha_j$ is as in \eqref{eq:constrPrepTerm} and $\beta'_j$ is an arbitrary tuple in $\NN^m$.
\end{theorem}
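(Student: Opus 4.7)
The plan is to bootstrap from the basic preparation theorem for constructible functions in \cite{CluckersMiller03}, which already delivers a cell decomposition, a common rational monomial map on each cell open over $\RR^k$, and a sum decomposition of the form \eqref{eq:constrPrepSum}--\eqref{eq:constrPrepTerm}, and then to refine this output to achieve the three additional features required here: termwise integrability of each $T_j$, a constant sign of each $T_j$, and $\beta$-independence of the integrability of $|\tld y|^{\alpha_j}(\log|\tld y|)^{\beta'_j}$.

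First I would apply the preparation theorem of \cite{CluckersMiller03} to obtain a finite partition $\A$ of $X\times\RR^m$ into cells over $\RR^k$ on each of which $f$ admits a prepared expression $\sum_j g_j(x)|\tld y|^{\alpha_j}(\log|\tld y|)^{\beta_j}u_j(x,y)$ with $u_j$ a $\varphi$-unit for a common rational monomial map $\varphi$ with center $\theta$. Since $\varphi$ is bounded by Definition \ref{def:ratMonMap} and each $u_j=F_j\circ\varphi$ with $F_j$ analytic and nonvanishing on the compact set $\cl(\varphi(A))$, each $u_j$ is bounded above and below away from $0$, so the units contribute no convergence issues to any integral of $|T_j|$. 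Next I would refine $\A$ so that on each cell the range of each $|\tld y_i|$ (for fixed $x$) is of one of four standard types: bounded away from both $0$ and $\infty$, accumulating only at $0$, accumulating only at $\infty$, or both; and further insert the subanalytic loci $|\tld y_i|=1$ to separate regions of constant sign of $\log|\tld y_i|$. Iterated integration then reduces the integrability of $|\tld y|^{\alpha_j}(\log|\tld y|)^{\beta'_j}$ on $A_x$ to the one-dimensional facts
\[
\int_{0}^{c}\tau^{\alpha}|\log\tau|^{\beta}\,d\tau<\infty\iff\alpha>-1,\qquad \int_{c}^{\infty}\tau^{\alpha}|\log\tau|^{\beta}\,d\tau<\infty\iff\alpha<-1,
\]
both valid for every $\beta\in\NN$ independently of $\beta$. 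Hence integrability of any single $|\tld y|^{\alpha_j}(\log|\tld y|)^{\beta'_j}$ on $A_x$ depends only on $\alpha_j$ and on the range-types of the $|\tld y_i|$, which yields the $\beta$-independence claim.

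The main obstacle is the termwise integrability of the $T_j$, because a priori the prepared decomposition can contain individually nonintegrable terms whose singularities cancel in the sum. I would resolve this in two stages. First, I group the summands on each refined cell by their exponent tuple $\alpha_j\in\QQ^m$; terms with distinct $\alpha$'s produce asymptotically incomparable magnitudes in some $\tld y_i$-direction on each refined cell, so they cannot cancel, and integrability of the full sum forces integrability of each $\alpha$-group. Second, within a single $\alpha$-group the terms differ only by the log-multi-index $\beta$, and on the regions where each $\log|\tld y_i|$ has constant sign the terms can be ordered so that larger $\beta$-multi-indices dominate smaller ones asymptotically; a standard induction peeling off the leading log-monomial then extracts termwise integrability from integrability of the subsum. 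Finally, a further subanalytic splitting on the sign of $g_j(x)$, combined with the already-controlled signs of $u_j$ and of each $(\log|\tld y_i|)^{\beta_{j,i}}$ on the refined cells, yields the constant-sign requirement and completes the proof.
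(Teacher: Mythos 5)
Your overall strategy diverges from the paper's in a way that creates a real gap: the paper's proof simply observes that everything in the statement \emph{except} the constant-sign requirement is already part of the conclusion of \cite[Theorem 1.3]{CluckersMiller03} --- in particular, termwise integrability of the $T_j$ and the $\beta$-independence of integrability are conclusions of that theorem, not things to be re-derived. You instead assume the cited theorem only gives the prepared form and try to recover termwise integrability by an asymptotic non-cancellation argument, and that argument does not work as stated. First, grouping by the exponent tuple $\alpha_j$ and claiming that distinct $\alpha$-groups ``cannot cancel'' ignores the $\varphi$-units: each $u_j=F_j\circ\varphi$ with $\varphi$ built from monomials $c_i(x)|\tld{y}|^{\mu_i}$, so Taylor-expanding $F_j$ shows that a single term with nominal exponent $\alpha_j$ carries contributions with exponents $\alpha_j+\mu$ for various $\mu$; two terms with the same $(\alpha_j,\beta_j)$ but different units and opposite-signed coefficients can then cancel to leave an integrable sum of individually non-integrable terms, a case your grouping never addresses. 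Second, even granting separation, deducing integrability of each group from integrability of the sum over a multivariable fiber $A_x$ cannot be done by inspecting asymptotics ``in some $\tld{y}_i$-direction'': integrability is an iterated-integral statement in which the inner integrals produce functions of $(x,y_{<j})$ that are no longer of product form, which is exactly why the multivariate construction in \cite{CluckersMiller03} is delicate. Your $\beta$-independence argument has the same defect --- the one-dimensional facts you quote govern only the innermost integral, whose endpoints $a_j(x,y_{<j})$, $b_j(x,y_{<j})$ vary with $y_{<j}$, so the induction does not close without the extra structure the cited preparation already builds in.

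A smaller but genuine problem is your final step of ``splitting on the sign of $g_j(x)$'': $g_j$ is constructible, not subanalytic, and the sign locus of a general constructible function is not obviously a subanalytic set (it is definable in the larger structure generated by the exponential, but your partition must consist of subanalytic cells). The paper avoids this by first writing each $g_j$ as a finite sum of products $h(x)\prod_i\log h_i(x)$ with $h$ and $h_i$ subanalytic, splitting that sum into separate terms, and then partitioning on the subanalytic sign conditions on $h$ and on $h_i-1$, together with the signs of $\log|\tld{y}_1|,\ldots,\log|\tld{y}_m|$. If you take \cite[Theorem 1.3]{CluckersMiller03} at the strength the paper uses --- termwise integrability and $\beta$-independence included --- then only this constant-sign refinement remains to be proved, and the correct argument is the short one just described.
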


\begin{proof}
This is a special case of \cite[Theorem 1.3]{CluckersMiller03}, except for the requirement that each term $T_j$ has constant sign.  To achieve this additional property, note that we may assume that each function $g_j$ can be written as a finite product $h(x)\prod_i \log h_i(x)$ for subanalytic functions $h:\Pi_k(A)\to\RR$ and $h_i:\Pi_m(A)\to(0,\infty)$, since any constructible function on $\Pi_m(A)$ may be written as a finite sum of such products.  Now partition $A$ into smaller subanalytic cells over $\RR^k$ so as to assume that the functions $h$, $\log h_i$ and $\log|\tld{y}_1|,\ldots,\log|\tld{y}_m|$ all have constant sign on $A$.
\end{proof}

\begin{lemma}[{\cite[Lemma 7.1]{CluckersMiller01}}]
\label{lemma:subanalBound}
If $f\in\C(X)$ for a subanalytic set $X\subset\RR^k$, then there exists a subanalytic function $G:X\to(0,\infty)$ such that $|f(x)| \leq G(x)$ for all $x\in X$.
\end{lemma}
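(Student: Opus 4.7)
The plan is to reduce everything to a pointwise bound on the logarithm. By Definition \ref{def:constructible}, any $f\in\C(X)$ can be written as a finite sum
\[
f(x) = \sum_{j=1}^{N} h_j(x) \prod_{i=1}^{r_j} \log h_{j,i}(x),
\]
where the $h_j:X\to\RR$ are subanalytic and the $h_{j,i}:X\to(0,\infty)$ are subanalytic and positive. By the triangle inequality,
\[
|f(x)| \leq \sum_{j=1}^{N} |h_j(x)| \prod_{i=1}^{r_j} |\log h_{j,i}(x)|,
\]
so it suffices to bound each $|\log h_{j,i}(x)|$ by an everywhere-positive subanalytic function of $h_{j,i}(x)$.

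The key elementary estimate is that for every $t>0$,
\[
|\log t| \leq t + \tfrac{1}{t}.
\]
Indeed, from the concavity inequality $\log s \leq s-1$ (valid for all $s>0$), one gets $\log t \leq t-1 \leq t$ when $t\geq 1$ and, applying the same inequality to $s=1/t$, $-\log t = \log(1/t) \leq 1/t - 1 \leq 1/t$ when $0<t\leq 1$. Either way $|\log t|\leq t+1/t$. Since $h_{j,i}>0$ on $X$, the function $x\mapsto h_{j,i}(x) + 1/h_{j,i}(x)$ is subanalytic (subanalytic functions form a ring closed under reciprocals on loci where the denominator is nowhere zero, and this is a definable operation in the structure of restricted analytic functions).

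Substituting this bound yields
\[
|f(x)| \leq \sum_{j=1}^{N} |h_j(x)| \prod_{i=1}^{r_j}\left(h_{j,i}(x) + \tfrac{1}{h_{j,i}(x)}\right) =: H(x),
\]
which is subanalytic since absolute value, sums, and products of subanalytic functions are subanalytic. Finally, setting $G(x) := H(x) + 1$ produces a subanalytic function $G:X\to(0,\infty)$ with $|f(x)|\leq G(x)$ for every $x\in X$, as required. No step is really an obstacle; the only content is recognizing that a crude two-term subanalytic majorant for $|\log t|$ is available, after which the definition of a constructible function does all the work.
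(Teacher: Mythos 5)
Your proof is correct. The paper itself does not prove this lemma --- it simply imports it as Lemma 7.1 of \cite{CluckersMiller01} --- so you have supplied a self-contained argument where the paper offers only a citation. Your argument is sound at every step: by Definition \ref{def:constructible} every element of $\C(X)$ is indeed a finite sum of terms $h_j\prod_i\log h_{j,i}$ with $h_j$ subanalytic and $h_{j,i}$ subanalytic and positive (real scalars and products of subanalytic factors being absorbed into $h_j$); the elementary inequality $|\log t|\leq t+1/t$ for $t>0$ is verified correctly from $\log s\leq s-1$; and $|h_j|$, $1/h_{j,i}$, and hence $H$ and $G=H+1$ are globally subanalytic, since definability in $\RR_{\mathrm{an}}$ is preserved by absolute values, reciprocals of nowhere-vanishing functions, sums, and products. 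This is essentially the standard way such bounds are obtained (one replaces each logarithm by a subanalytic majorant), so the route is the expected one; the only thing your write-up adds beyond the cited reference is making that majorant explicit, which is exactly the content of the lemma.
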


\begin{lemma}\label{lemma:subanalBndInteg}
If $f\in\C(X\times\RR^m)$ for a subanalytic set $X\subset\RR^k$, and if $f(x,\cdot)$ is integrable on $\RR^m$ for all $x\in X$, then there exists a subanalytic function $G:X\to(0,\infty)$ such that
\[
\int_{\RR^m}|f(x,y)|dy \leq G(x)
\]
for all $x\in X$.
\end{lemma}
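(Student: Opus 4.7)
The plan is to reduce the bound to an integral of a constructible function and then apply Lemma \ref{lemma:subanalBound}. First I would apply the preparation theorem (Theorem \ref{thm:constrPrep}) to $f$, obtaining a finite partition $\A$ of $X\times\RR^m$ into subanalytic cells over $\RR^k$. Cells $A\in\A$ that are not open over $\RR^k$ have at least one coordinate pinned to a graph, so their fibers $A_x\subset\RR^m$ have Lebesgue measure zero and contribute nothing to $\int_{\RR^m}|f(x,y)|dy$. Hence the integral is unchanged by discarding them.

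Next, on each open cell $A\in\A$, the preparation theorem provides a decomposition $f = \sum_j T_j$ on $A$, where each $T_j$ has constant sign on $A$ and each $T_j(x,\cdot)$ is integrable on $A_x$ for every $x\in\Pi_k(A)$. The constant-sign clause is the key point: it gives $|T_j| = \pm T_j$ on $A$, so $|T_j|\in\C(A)$. Extending by $0$ outside $A$ (which is legitimate because subanalytic cells have subanalytic characteristic functions, and $\C(X\times\RR^m)$ is stable under multiplication by such functions) yields a function $\tilde T_j\in\C(X\times\RR^m)$ that is integrable in $y$ for each fixed $x$. By Theorem \ref{thm:stabInteg}, the function
\[
H_{A,j}(x) = \int_{\RR^m} \tilde T_j(x,y)\, dy
\]
lies in $\C(X)$. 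The triangle inequality then gives
\[
\int_{\RR^m}|f(x,y)|\, dy \leq \sum_{A,j} H_{A,j}(x),
\]
where the sum is finite, so the right-hand side defines a constructible function $H\in\C(X)$.

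Finally I would apply Lemma \ref{lemma:subanalBound} to $H$ to obtain a subanalytic function $G:X\to(0,\infty)$ with $H(x)\leq G(x)$ for all $x\in X$, which is the desired bound. There is no real obstacle here: the subtlety is only that $|f|$ itself need not be constructible, but the constant-sign refinement in Theorem \ref{thm:constrPrep} lets us bound $|f|$ by a finite sum of constructible, integrable pieces, after which stability under integration and domination by a subanalytic function finish the argument.
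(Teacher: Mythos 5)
Your proof is correct and follows essentially the same route as the paper's: apply the preparation theorem, use the constant-sign property to make each $|T_j|$ constructible, integrate via Theorem \ref{thm:stabInteg}, and dominate by a subanalytic function via Lemma \ref{lemma:subanalBound}. Your explicit handling of the non-open cells (measure-zero fibers) and of the extension by zero are details the paper leaves implicit, but the argument is the same.
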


\begin{proof}
Apply Theorem \ref{thm:constrPrep} to $f$.  It suffices to focus on one cell $A$, where \eqref{eq:constrPrepSum} holds on $A$.  By applying the triangle inequality
\[
\int_{A_x}|f(x,y)|dy \leq \sum_{j}\int_{A_x}|T_j(x,y)|dy
\]
and using the fact that each $T_j(x,\cdot)$ is integrable on $A_x$ for each $x$, it suffices to further focus on one term $T_j$.  Since $T_j$ has a constant sign, $|T_j|$ is constructible, so the lemma follows from Theorem \ref{thm:stabInteg} and Lemma \ref{lemma:subanalBound}.
\end{proof}

\begin{proposition}[{\cite[Proposition 1.5]{CluckersMiller01}}, Decay of constructible functions]
\label{thm:decayBound}
Let $f\in\C(X\times\RR)$ for a subanalytic set $X\subset\RR^k$, and suppose that for each $x\in X$, $f(x,\lambda)\to 0$ as $\lambda\to\infty$.  Then there exist $p > 0$ and a subanalytic function $g:X\to(0,\infty)$ such that
\[
|f(x,\lambda)| \leq \lambda^{-p}
\]
for all $x\in X$ and $\lambda \geq g(x)$.
\end{proposition}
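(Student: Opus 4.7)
My plan is to apply the preparation theorem for constructible functions with $m=1$ (in its general form from \cite{CluckersMiller03}, which does not require the integrability hypothesis of Theorem \ref{thm:constrPrep}) and perform an asymptotic analysis on each cell of the resulting decomposition of $X\times(1,\infty)$. Cells with bounded fiber play no role as $\lambda\to\infty$, so I focus on an open cell $A$ with fiber $A_x=(a(x),+\infty)$. The center $\theta$ automatically satisfies $\theta(x)\le a(x)$ (its graph must be disjoint from $A$), so $\tilde\lambda:=\lambda-\theta(x)>0$ and $\tilde\lambda\to+\infty$ as $\lambda\to\infty$. On $A$,
\[
f(x,\lambda)=\sum_{j=1}^{N}g_j(x)\,\tilde\lambda^{\alpha_j}(\log\tilde\lambda)^{\beta_j}\,u_j(x,\lambda),
\]
with each $u_j=F_j\circ\varphi$ a $\varphi$-unit. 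Since $\varphi$ is bounded while $\tilde\lambda\to\infty$, every exponent of $\tilde\lambda$ appearing in $\varphi$ is nonpositive, whence $\varphi(x,\lambda)$ converges to a subanalytic limit $v(x)$ and each $u_j(x,\lambda)\to F_j(v(x))\ne 0$.

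Next I would Taylor-expand each $u_j$ around $v(x)$ and expand each $\tilde\lambda^{\alpha_j}(\log\tilde\lambda)^{\beta_j}$ in descending orders of $\lambda$; collecting like terms yields an asymptotic expansion
\[
f(x,\lambda)\sim\sum_{l}H_l(x)\,\lambda^{a_l}(\log\lambda)^{b_l}\qquad(\lambda\to\infty),
\]
with pairwise distinct $(a_l,b_l)\in\QQ\times\NN$ and constructible coefficients $H_l$. Distinct polynomial--logarithmic monomials being asymptotically linearly independent at infinity, the pointwise hypothesis $f(x,\lambda)\to 0$ forces $H_l\equiv 0$ on $\Pi_k(A)$ whenever $a_l\ge 0$, so that the dominant surviving exponent $a^*:=\max\{a_l:H_l\not\equiv 0\}$ is strictly negative and does not depend on $x$. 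This gives $|f(x,\lambda)|\le C(x)\,\lambda^{a^*}(\log\lambda)^{b^*}$ for $\lambda$ above some subanalytic threshold, with $C$ constructible. Absorbing $(\log\lambda)^{b^*}$ into a slightly smaller power of $\lambda$ and bounding $C$ above by a subanalytic function via Lemma \ref{lemma:subanalBound}, we obtain $|f(x,\lambda)|\le\lambda^{-p_A}$ for $\lambda\ge g_A(x)$ with $0<p_A<|a^*|$ and $g_A$ subanalytic. Taking $p:=\min_A p_A>0$ and $g(x):=\max_A g_A(x)$ over the finitely many cells completes the proof.

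The main technical obstacle is making the passage from the prepared form to the clean asymptotic expansion rigorous and uniform in $x$. Since $\varphi$ has only finitely many exponents of $\tilde\lambda$, all nonpositive, one has $\varphi(x,\lambda)-v(x)=O(\tilde\lambda^{-\delta})$ for some uniform $\delta>0$, which controls the remainder in the Taylor expansion of each $F_j$ uniformly on $A$. Given this, the identical vanishing of the nonnegative-exponent coefficients $H_l$ is a direct consequence of the asymptotic linear independence of the monomials $\lambda^a(\log\lambda)^b$ together with the pointwise hypothesis $f(x,\lambda)\to 0$, and no further partitioning of $X$ beyond what the preparation theorem already supplies is needed.
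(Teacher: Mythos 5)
The paper does not prove this proposition at all: it is imported verbatim from \cite[Proposition 1.5]{CluckersMiller01}, so there is no internal argument to compare yours against. Judged on its own, your proof is correct in outline and is the natural preparation-plus-asymptotics argument one would expect the cited source to use. The key steps all check out: on an unbounded cell the exponents $\mu_i$ of $\tilde\lambda$ in $\varphi$ must be nonpositive by boundedness of $\varphi$, so $\varphi$ converges and the units admit uniform Taylor expansions with remainders $O(\tilde\lambda^{(D+1)\mu_{\max}})$; the finite truncated expansion in monomials $\lambda^{a}(\log\lambda)^{b}$ with distinct exponent pairs, together with the pointwise hypothesis and the lexicographic dominance of the leading monomial, forces every coefficient with $a\geq 0$ to vanish identically; and the surviving bound $C(x)\lambda^{a^*}(\log\lambda)^{b^*}$ with $a^*<0$ converts to $\lambda^{-p}$ for $\lambda\geq g(x)$ by bounding $(\log\lambda)^{b^*}\leq C_\epsilon\lambda^\epsilon$ and absorbing the constructible $C$ into the threshold via Lemma \ref{lemma:subanalBound}. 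Two points deserve one more sentence each if you write this up. First, the coefficients $H_l$ involve $\partial^\gamma F_j(v(x))$; these are subanalytic because $F_j$ is analytic on the compact subanalytic set $\cl(\varphi(A))$ and hence globally subanalytic there, so $H_l$ is indeed constructible. Second, the implied constants in your remainder estimates (powers of $c_i(x)$ and of $\theta(x)$ from the binomial expansion of $(\lambda-\theta(x))^{\alpha}$ and of $\log(\lambda-\theta(x))$) are not uniform in $x$; this is harmless only because the conclusion allows the threshold $g(x)$ to absorb them, and you should say so explicitly rather than calling the remainder control ``uniform on $A$.'' Neither point is a gap.
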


In order to prove Theorem \ref{thm:decayBnd}, it is convenient to change notation by rewriting the function $F$ defined in \eqref{eq:oscInt} in polar form.  Namely, let $S = \{z\in\RR^n : \|z\| = 1\}$, and define $F:X\times S\times(0,\infty)\to\CC$ by
\begin{equation}\label{eq:Fpolar}
F(x,\xi,\lambda) = \int_{\RR^m} f(x,y) e^{i \lambda \, \xi\cdot\phi(x,y)} dy.
\end{equation}
So the goal is to prove that there exist $p > 0$ and a subanalytic function $g:X\to(0,\infty)$ such that
\[
|F(x,\xi,\lambda)| \leq g(x) \lambda^{-p}
\]
for all $(x,\xi,\lambda)\in X\times S \times (0,\infty)$.  The following lemma shows that for $F$ as defined in \eqref{eq:Fpolar}, only the behavior of $F(x,\xi,\lambda)$ for large values of $\lambda$ is relevant when proving Theorem \ref{thm:decayBnd}.

\begin{lemma}\label{lemma:decayBoundLarge}
Let $F$ be defined as in \eqref{eq:Fpolar}, where $\phi:X\times\RR^m\to\RR^n$ is subanalytic and $f\in\C(X\times\RR^m)$ is such that $f(x,\cdot)$ is integrable on $\RR^m$ for each $x\in X$. Suppose that there exist a rational number $p>0$ and subanalytic functions $g,h:X\to(0,\infty)$ such that
\[
|F(x,\xi,\lambda)| \leq g(x) \lambda^{-p}
\]
for all $(x,\xi,\lambda)\in X\times S\times (0,\infty)$ with $\lambda \geq h(x)$.  Then there exists a subanalytic function $G:X\to(0,\infty)$ such that
\[
|F(x,\xi,\lambda)| \leq G(x) \lambda^{-p}
\]
for all $(x,\xi,\lambda)\in X\times S\times (0,\infty)$.
\end{lemma}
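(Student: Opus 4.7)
The plan is to bound $|F|$ uniformly in $\lambda$ using the trivial $L^1$ estimate, then splice this small-$\lambda$ bound together with the given large-$\lambda$ bound. Since $|e^{i\lambda\,\xi\cdot\phi(x,y)}| = 1$, for every $(x,\xi,\lambda)$ one has
\[
|F(x,\xi,\lambda)| \leq \int_{\RR^m} |f(x,y)|\,dy.
\]
By Lemma \ref{lemma:subanalBndInteg} applied to $f\in\C(X\times\RR^m)$, there is a subanalytic function $M:X\to(0,\infty)$ with $\int_{\RR^m}|f(x,y)|\,dy \leq M(x)$ for all $x\in X$. Thus $|F(x,\xi,\lambda)| \leq M(x)$ for every $(x,\xi,\lambda)\in X\times S\times(0,\infty)$.

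Now fix $x\in X$. On the range $\lambda \geq h(x)$, the hypothesis gives $|F(x,\xi,\lambda)| \leq g(x)\lambda^{-p}$ directly. On the range $0 < \lambda < h(x)$, the uniform bound gives
\[
|F(x,\xi,\lambda)| \leq M(x) = M(x)\lambda^{p}\lambda^{-p} \leq M(x)\,h(x)^{p}\,\lambda^{-p}.
\]
Combining the two ranges, the function
\[
G(x) := g(x) + M(x)\,h(x)^{p}
\]
satisfies $|F(x,\xi,\lambda)| \leq G(x)\lambda^{-p}$ on all of $X\times S\times(0,\infty)$.

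It remains to observe that $G$ is subanalytic. Since $h:X\to(0,\infty)$ is subanalytic and $p$ is rational, the function $x\mapsto h(x)^p$ is subanalytic (rational powers of positive subanalytic functions are subanalytic, as $t\mapsto t^p$ on $(0,\infty)$ is subanalytic for rational $p$). Then $M\cdot h^p$ is subanalytic, and so is $g + M\cdot h^p$, completing the proof.

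The only subtlety is the need for $p$ to be rational, which is exactly what is assumed in the statement; this is what guarantees that $h^p$ is subanalytic rather than merely constructible, and hence that $G$ lies in the desired class. No deeper ingredient beyond Lemma \ref{lemma:subanalBndInteg} is required.
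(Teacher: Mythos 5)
Your proof is correct and follows essentially the same route as the paper's: bound $|F|$ by $\int|f|$ via Lemma \ref{lemma:subanalBndInteg}, convert that uniform bound into a $\lambda^{-p}$ bound on the range $0<\lambda<h(x)$ using $h(x)^p$, and combine (the paper takes a $\max$ where you take a sum, which is immaterial). Your explicit remark that rationality of $p$ is what keeps $h^p$ subanalytic is a correct observation that the paper leaves implicit.
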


\begin{proof}
By Lemma \ref{lemma:subanalBndInteg} there exists a subanalytic function $H:X\to(0,\infty)$ such that
\[
|F(x,\xi,\lambda)| \leq \int_{\RR^m}|f(x,y)|dy \leq H(x) < \frac{H(x) h(x)^p}{\lambda^p}
\]
for all $(x,\xi,\lambda)\in X\times S \times (0,\infty)$ with $0 < \lambda < h(x)$.  So define $G(x) = \max\{g(x), H(x) h(x)^p\}$.
\end{proof}

\begin{lemma}\label{lemma:paramIntegDecay}
Let $f\in\C(U)$ for a subanalytic set $U\subset\RR^{k+m}$, and suppose that $f(x,\cdot)$ is integrable on $U_x$ for all $x\in\Pi_k(U)$, and that $E\subset (a,\infty)\times U$ is a subanalytic set such that $E_\lambda \searrow \emptyset$ a.e. as $\lambda\to\infty$, where $E_\lambda = \{(x,y)\in U : (\lambda,x,y)\in E\}$ for each $\lambda>a$.  Then there exist $p > 0$ and a subanalytic function $g:\Pi_k(U)\to(0,\infty)$ such that
\[
\int_{E_\lambda}|f(x,y)|dy \leq \lambda^{-p}
\]
for all $x\in\Pi_k(U)$ and $\lambda \geq g(x)$.
\end{lemma}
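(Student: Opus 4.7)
The plan is to identify
\[
h(\lambda,x) \;:=\; \int_{(E_\lambda)_x}|f(x,y)|\,dy
\]
with a constructible function of $(\lambda,x)$ that tends pointwise to $0$ as $\lambda\to\infty$, and then to apply Proposition~\ref{thm:decayBound}.

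First I would reduce to the case of a single non-negative constructible integrand. Applying Theorem~\ref{thm:constrPrep} to $f$ (viewed, after extension by zero, as a constructible function on $\Pi_k(U)\times\RR^m$) yields a finite partition of $U$ into subanalytic cells on each of which $f=\sum_j T_j$ with every $T_j$ of constant sign, so that $|f|\le\sum_j|T_j|$ and each $|T_j|$ is a non-negative constructible function. A finite sum of bounds of the form $c\lambda^{-p}$ can be absorbed into a single bound $\lambda^{-p'}$ for any $p'\in(0,p)$ once $\lambda$ exceeds a subanalytic threshold, so it suffices to prove the lemma under the extra assumption that $f\ge 0$.

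Under this assumption, since $E$ is subanalytic its characteristic function $\chi_E$ is constructible, and so is the product $(\lambda,x,y)\mapsto f(x,y)\chi_E(\lambda,x,y)$ (extended by zero off $U$). The product is dominated in $y$ by the integrable function $f(x,\cdot)$, and hence Theorem~\ref{thm:stabInteg} places $h(\lambda,x)=\int_{\RR^m}f(x,y)\chi_E(\lambda,x,y)\,dy$ in $\C((a,\infty)\times\Pi_k(U))$. For each fixed $x\in\Pi_k(U)$, the fibers $(E_\lambda)_x\subset\RR^m$ are monotone decreasing in $\lambda$, and the hypothesis forces $\bigcap_{\lambda>a}(E_\lambda)_x$ to have Lebesgue measure zero in $\RR^m$; dominated convergence with the integrable majorant $f(x,\cdot)$ then gives $h(\lambda,x)\to 0$ as $\lambda\to\infty$.

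The proof is completed by applying Proposition~\ref{thm:decayBound} to $h$: there exist $p>0$ and a subanalytic function $g\colon\Pi_k(U)\to(0,\infty)$ such that $h(\lambda,x)\le\lambda^{-p}$ for every $x\in\Pi_k(U)$ and every $\lambda\ge g(x)$, which is the required estimate. The main subtle point is the pointwise decay in the previous paragraph, since one needs $h(\lambda,x)\to 0$ at \emph{every} $x$ in order to apply Proposition~\ref{thm:decayBound}: a single fiber where $\bigcap_{\lambda}(E_\lambda)_x$ still carries positive $|f(x,\cdot)|$-mass would forbid any uniform bound of the form $\lambda^{-p}$, so the measure-zero condition must be extracted fiberwise from the hypothesis on $E$.
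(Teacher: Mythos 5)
Your argument is correct and follows essentially the same route as the paper's proof: prepare $f$ via Theorem \ref{thm:constrPrep} to reduce to constant-sign (hence, after taking absolute values, still constructible) terms, note that $(\lambda,x)\mapsto\int_{E_\lambda\cap A_x}|T_j(x,y)|\,dy$ is constructible by Theorem \ref{thm:stabInteg} and tends to $0$ pointwise, and finish with Proposition \ref{thm:decayBound}. The fiberwise measure-zero point you flag at the end is handled no more explicitly in the paper, which likewise reads the hypothesis $E_\lambda\searrow\emptyset$ a.e.\ as holding in every fiber over $x$ --- which is exactly the form in which the lemma is invoked in the proof of Theorem \ref{thm:decayBnd}.
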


\begin{proof}
Theorem \ref{thm:constrPrep} gives a partition $\A$ of $U$ into subanalytic cells over $\RR^k$ such that for each $A\in\A$ which is open over $\RR^k$, we can write $f$ as a finite sum $f(x,y) = \sum_{j} T_j(x,y)$ on $A$, where each function $T_j:A\to\RR$ is constructible and has constant sign, and $T_j(x,\cdot)$ is integrable on $A_x$ for each $x\in\Pi_k(A)$.  Since
\[
\int_{E_\lambda\cap A_x}|f(x,y)|dy \leq \sum_{j}\int_{E_\lambda\cap A_x}|T_j(x,y)|dy,
\]
it suffices to focus on one of the integrals of $|T_j|$ and show that it has a bound of the desired form.  The function $|T_j|$ is constructible because $T_j$ has constant sign, so Theorem \ref{thm:stabInteg} shows that
\begin{equation}\label{eq:intTjLambda}
(x,\lambda) \mapsto \int_{E_\lambda\cap A_x}|T_j(x,y)|dy
\end{equation}
is a constructible function on $\Pi_k(U)\times(a,\infty)$.  For each $x\in\Pi_k(A)$, \eqref{eq:intTjLambda} tends to $0$ as $\lambda\to\infty$ because $T_j(x,\cdot)$ is integrable and because $E_\lambda\cap A_x \searrow \emptyset$ a.e.\ as $\lambda\to\infty$.  Therefore we are done by applying Proposition \ref{thm:decayBound} to \eqref{eq:intTjLambda}.
\end{proof}

\begin{lemma}\label{lemma:derivUnit}
Suppose that $A\subset\RR^{k+m}$ is a cell over $\RR^k$ which is open over $\RR^k$, that $\varphi:A\to\RR^M$ is a rational monomial map with center $0$, that $f$ is a $\varphi$-function, and that $l\in\{1,\ldots,m\}$.  Then the function $y_l\PD{}{f}{y_l}$ is bounded on $A$.
\end{lemma}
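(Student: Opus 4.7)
The plan is to reduce the statement to the chain rule plus a compactness argument, exploiting the fact that the center $\theta$ is the zero function so $\tld{y} = y$, and that rational monomial maps have the special property of being homogeneous in each $y_l$ under the logarithmic derivative $y_l \partial/\partial y_l$.

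First I would observe that since $\theta = 0$ and $A$ is open over $\RR^k$, the disjointness of $\Graph(\theta_j)$ from $\Pi_{k+j}(A)$ forces $y_l \neq 0$ on all of $A$, so that each factor $|y_l|^{\mu_{i,l}}$ appearing in $\varphi_i(x,y) = c_i(x)|y_1|^{\mu_{i,1}}\cdots|y_m|^{\mu_{i,m}}$ is smooth on $A$. A direct computation then gives
\[
y_l\, \PD{}{\varphi_i}{y_l}(x,y) = y_l\cdot \mu_{i,l}\,\sign(y_l)\,|y_l|^{\mu_{i,l}-1}\cdot c_i(x)\prod_{j\neq l}|y_j|^{\mu_{i,j}} = \mu_{i,l}\,\varphi_i(x,y),
\]
using $y_l\,\sign(y_l) = |y_l|$. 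This identity is the core of the argument.

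Next, writing $f = F\circ\varphi$ for an analytic function $F:\cl(\varphi(A))\to\RR$ and applying the chain rule yields
\[
y_l\,\PD{}{f}{y_l}(x,y) = \sum_{i=1}^{M} \PD{}{F}{\xi_i}(\varphi(x,y))\cdot y_l\,\PD{}{\varphi_i}{y_l}(x,y) = \sum_{i=1}^{M} \mu_{i,l}\,\varphi_i(x,y)\,\PD{}{F}{\xi_i}(\varphi(x,y)).
\]
So it remains to show each summand is bounded on $A$.

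For this I would use that, by Definition \ref{def:ratMonMap}, $\varphi$ is bounded and takes values in $(0,\infty)^M$, so $\cl(\varphi(A))$ is a compact subset of $[0,\infty)^M$. By Definition \ref{def:analytic}, $F$ extends analytically to an open neighborhood of this compact set, so each partial derivative $\partial F/\partial \xi_i$ is continuous on $\cl(\varphi(A))$ and hence bounded there. Combined with the boundedness of each $\varphi_i$, this makes every term $\mu_{i,l}\,\varphi_i\cdot(\partial F/\partial\xi_i)\circ\varphi$ bounded on $A$, which is exactly the desired conclusion. There is no real obstacle here; the only subtlety worth double-checking is that the center-$0$ assumption is used essentially to guarantee $y_l\neq 0$ on $A$ and to produce the clean homogeneity identity $y_l\,\partial_{y_l}\varphi_i = \mu_{i,l}\varphi_i$.
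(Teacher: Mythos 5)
Your proof is correct and follows essentially the same route as the paper's: the logarithmic-derivative identity $y_l\,\partial_{y_l}\varphi_i=\mu_{i,l}\varphi_i$, the chain rule, and boundedness of the components of $\varphi$ and of the first partials of $F$ on the compact set $\cl(\varphi(A))$. Your handling of the absolute values via $y_l\sign(y_l)=|y_l|$ and the observation that $y_l\neq 0$ on $A$ are slightly more careful than the paper's one-line computation, but the argument is the same.
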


\begin{proof}
Write $\varphi(x,y) = (c_1(x)y^{\mu_1},\ldots,c_M(x)y^{\mu_M})$ with $\mu_j = (\mu_{j,1},\ldots,\mu_{j,m})\in\QQ^m$ for each $j\in \{1,\ldots,M\}$, write $f = F\circ\varphi$ for an analytic function $F:\cl(\varphi(A))\to\RR$, and write $(X_1,\ldots,X_M)$ for coordinates on $\RR^M$.  Then
\[
y_l\PD{}{f}{y_l}(x,y) = \sum_{j=1}^{M}  \mu_{j,l} c_j(x) y^{\mu_j} \PD{}{F}{X_j}\circ\varphi(x,y)
\]
on $A$, and this function is bounded because the components of $\varphi$ and the first partial derivatives of $F$ are bounded.
\end{proof}

The asymptotic bounds given in the following proposition are key to our proof of Theorem \ref{thm:decayBnd}.

\begin{proposition}\label{prop:main}
Let $p>0$ and $f\in\C(U)$, where $U\subset\RR^{k+m}$ is subanalytic and $f(x,\cdot)$ is integrable on $U_x$ for all $x\in\Pi_k(U)$.  Then there exist a constant $c > 0$, a subanalytic function $G:\Pi_k(U)\to(0,\infty)$ and a subanalytic set $E\subset(1,\infty)\times U$ such that for each $x\in\Pi_k(U)$,
\[
\text{$E_{(\lambda,x)} \nearrow U_x$ a.e. as $\lambda\to\infty$,}
\]
and such that we can write $f$ as a finite sum
\begin{equation}\label{eq:mainSum}
f(x,y) = \sum_{l\in L} g_l(x,y_{<m}) h_l(x,y)
\end{equation}
on $U$ for some $g_l\in\C(\Pi_{k+m-1}(U))$ and $h_l\in\C(U)$ such that for each $(\lambda,x)\in\Pi_{1+k}(E)$,
\begin{equation}\label{eq:main_gBnd}
\int_{\Pi_{m-1}(E_{(\lambda,x)})} |g_l(x,y_{<m})|\, dy_{<m} \leq G(x)\lambda^p,
\end{equation}
and such that for each $(\lambda,x,y_{<m})\in\Pi_{1+k+(m-1)}(E)$,
\begin{equation}\label{eq:main_hBnd}
\left(\sup_{y_m\in E_{(\lambda,x,y_{<m})}} |h_l(x,y)|\right)
+ \int_{E_{(\lambda,x,y_{<m})}}\left|\PD{}{h_l}{y_m}(x,y)\right| dy_m
\leq
c\lambda^p.
\end{equation}
\end{proposition}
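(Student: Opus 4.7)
The plan is to apply the preparation theorem (Theorem \ref{thm:constrPrep}) to $f$, decompose each prepared term into the required product form, and then define $E$ by restricting each cell coordinate $|\tld y_l|$ to a window $[\lambda^{-1/N}, \lambda^{1/N}]$ for a sufficiently large integer $N$ depending on $p$. On each cell $A$ of the resulting partition $\A$ of $U$ that is open over $\RR^k$, Theorem \ref{thm:constrPrep} provides a center $\theta^A$ with coordinates $(x,\tld y)$, a rational monomial map $\varphi$, and an expression $f = \sum_j T_j$ where $T_j(x,y) = g_j(x)|\tld y|^{\alpha_j}(\log|\tld y|)^{\beta_j} u_j(x,y)$ has constant sign and $u_j$ is a $\varphi$-unit.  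Splitting $\alpha_j = (\alpha_j^{<m},\alpha_{j,m})$ and $\beta_j = (\beta_j^{<m},\beta_{j,m})$, I would factor
\[
T_j = \bigl[g_j(x)|\tld y_{<m}|^{\alpha_j^{<m}}(\log|\tld y_{<m}|)^{\beta_j^{<m}}\bigr]\cdot\bigl[|\tld y_m|^{\alpha_{j,m}}(\log|\tld y_m|)^{\beta_{j,m}} u_j(x,y)\bigr],
\]
taking the first bracket as the contribution to $g_l(x,y_{<m})$ and the second as that to $h_l(x,y)$, both extended by zero outside $A$.  Cells of $\A$ that are not open over $\RR^k$ have fibers of measure zero, so the restriction of $f$ to their union is absorbed into one extra pair $(g_{\mathrm{corr}},h_{\mathrm{corr}})$ supported disjointly from (the eventual) $E$, where all required bounds hold vacuously.

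I would then set
\[
E = \bigcup_{A\text{ open}} \bigl\{(\lambda, x, y) \in (1,\infty)\times A : \lambda^{-1/N} \leq |\tld y^A_l| \leq \lambda^{1/N} \text{ for each } l=1,\ldots,m\bigr\},
\]
a finite union of subanalytic sets.  The convergence $E_{(\lambda, x)} \nearrow U_x$ a.e.\ as $\lambda\to\infty$ holds because the set in $U_x$ not eventually captured by $E_{(\lambda,x)}$---namely the non-open cells together with the loci $\{\tld y^A_l=0\}$ inside open cells---has measure zero, while off that set every $|\tld y^A_l|$ is a strictly positive, finite real number, so the defining inequalities for $E$ are eventually satisfied.

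To verify the $h_l$ bound, I use that $u_j$ is uniformly bounded on $A$ (since $\varphi(A)$ is bounded and the analytic $F$ with $u_j=F\circ\varphi$ extends continuously to $\cl(\varphi(A))$), giving $|h_l|\leq c\lambda^{|\alpha_{j,m}|/N}(\log\lambda)^{\beta_{j,m}}$ on $E$.  For $\partial h_l/\partial y_m$, the identity $\partial/\partial y_m = \partial/\partial \tld y_m$ on $A$ and Lemma \ref{lemma:derivUnit} applied to $u_j$ in the $\tld y$-coordinates (where the center is $0$) yield $\partial u_j/\partial y_m = O(1/|\tld y_m|)$, so every summand of $\partial h_l/\partial y_m$ is $O(|\tld y_m|^{\alpha_{j,m}-1}|\log|\tld y_m||^{\beta_{j,m}})$, whose integral over $|\tld y_m|\in[\lambda^{-1/N},\lambda^{1/N}]$ is $O(\lambda^{(|\alpha_{j,m}|+1)/N}(\log\lambda)^{\beta_{j,m}+1})$.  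For the $g_l$ bound, the triangular change of variables $y_{<m}\to\tld y_{<m}$ (Jacobian $1$), Fubini over the box $|\tld y_l|\in[\lambda^{-1/N},\lambda^{1/N}]$ for $l<m$, and Lemma \ref{lemma:subanalBound} applied to $g_j(x)$ together give
\[
\int_{\Pi_{m-1}(E_{(\lambda,x)})}|g_l|\,dy_{<m} \leq G(x)\prod_{l<m} O\bigl(\lambda^{(|\alpha_{j,l}|+1)/N}(\log\lambda)^{\beta_{j,l}+1}\bigr)
\]
for a suitable subanalytic $G$.

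The main obstacle will be the uniform choice of $N$.  Since the exponents $|\alpha_{j,l}|+1$ take only finitely many values across all open cells and prepared terms, and $(\log\lambda)^\beta \leq \lambda^{p/2}$ for $\lambda$ beyond a threshold $\lambda_0$ depending only on $p$ and $\beta$, choosing $N$ large enough forces every $\lambda$-exponent occurring above to be at most $p$ once $\lambda\geq\lambda_0$; the remaining range $\lambda\in(1,\lambda_0)$ is handled by enlarging the constants $c$ and $G(x)$.  Summing the resulting estimates over the finitely many open cells and prepared terms then yields the proposition.
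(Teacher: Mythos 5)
Your argument is correct, and it shares the paper's overall architecture: apply the preparation theorem (Theorem \ref{thm:constrPrep}), split each prepared term as $g_l(x,y_{<m})\,h_l(x,y)$ by separating the $\tld{y}_m$-factors and the unit from the rest, dismiss non-open cells as having measure-zero fibers, and obtain \eqref{eq:main_hBnd} from the window $|\tld{y}_m|\in[\lambda^{-1/N},\lambda^{1/N}]$ together with Lemma \ref{lemma:derivUnit}. Where you genuinely diverge is in the mechanism for \eqref{eq:main_gBnd}. You constrain \emph{every} coordinate $|\tld{y}_l|$ to the window and bound $\int|g_l|$ directly as a product of one-dimensional integrals over $[\lambda^{-1/N},\lambda^{1/N}]$, so $N$ must be chosen large relative to all the exponents $\alpha_{j,l}$, $l<m$, occurring in the preparation (possible, since there are finitely many). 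The paper instead windows only $y_m$ and adds to $E$ the condition $\lambda^{-r}<K^{-1}\int_{A_{(x,y_{<m})}}\psi(t)\,dt$ for an explicit subanalytic minorant $\psi$ of $|h|$ in the last variable; \eqref{eq:main_gBnd} then falls out of the integrability of $f$ itself, via $\lambda^{-r}\int|g_l|\,dy_{<m}\le\int|g_l|\bigl(\int|h_l|\,dy_m\bigr)dy_{<m}=\int|f|\le G(x)$ and Lemma \ref{lemma:subanalBndInteg}, for any $r\le p$ independently of the $y_{<m}$-exponents. The price the paper pays is the case analysis on whether $\{y_m:(x,y)\in A\}$ lies in $(0,e^{-1})$, $[e^{-1},e]$ or $(e,\infty)$ (needed to make $\psi$ a genuine minorant); your route avoids $\psi$ and that case analysis entirely, at the cost of heavier exponent bookkeeping. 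Two points worth making explicit in a write-up: the loci $\{\tld{y}^A_l=0\}$ are in fact empty on each open cell (the graph of a center is disjoint from the corresponding projection of the cell), so the a.e.\ convergence only needs to discard the non-open cells; and the per-cell bounds $|g_j(x)|\le G_0(x)$ from Lemma \ref{lemma:subanalBound} live on $\Pi_k(A)$ and must be assembled into a single subanalytic $G$ on all of $\Pi_k(U)$.
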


\begin{proof}
Apply Theorem \ref{thm:constrPrep} to $f$.  This constructs a partition $\A$ of $U$ into cells over $\RR^k$ such that for each $A\in\A$ which is open over $\RR^k$, we can write $f$ as a finite sum
\[
f(x,y) = \sum_{l\in L} T_l(x,y)
\]
on $A$ with each $T_l$ of the form
\[
T_l(x,y) = a(x)|\tld{y}|^\alpha (\log|\tld{y}|)^\beta u(x,y),
\]
where $a\in\C(\Pi_k(A))$, $\alpha = (\alpha_1,\ldots,\alpha_m)\in\QQ^m$, $\beta = (\beta_1,\ldots,\beta_m)\in\NN^m$, $u$ is a positively-valued $\varphi$-unit, and  $(x,\tld{y})$ are coordinates on $A$ with center $\theta$, with $\theta$ being the center of $\varphi$.   Fix $A\in\A$.  It suffices to prove the proposition for the restriction of $f$ to $A$.

If $A$ is not open over $\RR^k$, then we are done by taking the sum in \eqref{eq:mainSum} to consist of the single term $f(x,y)$ itself, and by taking $E$ to be the empty set.  So we may assume that $A$ is open over $\RR^k$.  If for each $l\in L$ we can construct $c_l$, $G_l$ and $E_l$ so that the conclusion of the proposition holds with the objects $T_l$, $c_l$, $G_l$ and $E_l$ in place of $f\Restr{A}$, $c$, $G$ and $E$, respectively, then we are done by putting $c = \sum_{l\in L} c_l$, $G(x) = \sum_{l\in L}G_l(x)$ and $E=\bigcap_{l\in L}E_l$.  So we may simply assume that
\[
f(x,y) = a(x)|\tld{y}|^\alpha (\log|\tld{y}|)^\beta u(x,y) \mbox{ on }A.
\]
After pulling back by the inverse of the coordinate transformation $(x,y) \mapsto (x,\sigma_1\tld{y},\ldots,\sigma_m\tld{y}_m)$ for a suitable choice of $\sigma_1,\ldots,\sigma_m\in\{-1,1\}$, we may further assume that $\theta = 0$ and that $A\subset\RR^k\times(0,\infty)^m$.  So we may write
\[
f(x,y) = g(x,y_{<m}) h(x,y),
\]
where
\[
g(x,y_{<m}) = a(x)y_{<m}^{\alpha_{<m}}(\log y_{<m})^{\beta_{<m}}
\]
and
\[
h(x,y) = y_{m}^{\alpha_m}(\log y_m)^{\beta_m} u(x,y).
\]

By further partitioning $A$, we may assume that $\{y_m : (x,y)\in A\}$ is contained in either $(0,e^{-1})$, $[e^{-1},e]$ or $(e,\infty)$, where $e$ is the base of the natural logarithm.  If $\{y_m : (x,y)\in A\} \subset [e^{-1},e]$, then we may assume that $\beta_m = 0$; this is because in this case we may write
\[
(\log y_m)^{\beta_m}u(x,y) = ((\log y_m)^{\beta_m}u(x,y) - c) + c
\]
for a sufficiently large constant $c$ so that $(\log y_m)^{\beta_m}u(x,y) - c$ and $c$ are both $\varphi$-units, so we may separate $f$ into two terms, both of which are still integrable in $y$ since the value of $\alpha$ has not changed.  In this way, we may assume that $|\log y_m|^{\beta_m} \geq 1$ on $A$ regardless of whether $\{y_m : (x,y)\in A\}$ is contained in $(0,e^{-1})$, $[e^{-1},e]$ or $(e,\infty)$.

By Lemma \ref{lemma:derivUnit} we may fix a constant $K > 1$ such that
\[
K^{-1} < u(x,y) < K
\quad\text{and}\quad
\left|y_m\PD{}{u}{y_m}(x,y)\right| < K
\]
on $U$.  Define $\psi:(0,\infty)\to(0,\infty)$ by
\[
\psi(t)
=
\begin{cases}
t^{\alpha_m},
    & \text{if $\alpha_m\neq  -1$,}
    \\
t^{-1/2},
    & \text{if $\alpha_m=-1$ and $t\leq 1$,}
    \\
t^{-2},
    & \text{if $\alpha_m=-1$ and $t > 1$.}
\end{cases}
\]
Note that
\[
K^{-1}\psi(y_m) \leq |h(x,y)|
\]
on $A$, and that
\[
(x,y_{<m})\mapsto K^{-1}\int_{A_{(x,y_{<m})}} \psi(t) dt
\]
is a positively-valued subanalytic function on $\Pi_{k+m-1}(A)$.

Consider a positive rational number $r$, to be specified later, and define
\[
E
=
\left\{(\lambda,x,y) \in (1,\infty)\times A : \lambda^{-r} < y_m < \lambda^r,
 \lambda^{-r} < K^{-1}\int_{A_{(x,y_{<m})}} \psi(t) dt\right\}.
\]
Note that regardless of the choice of $r$, the set $E$ will be subanalytic and $E_{(\lambda,x)}\nearrow A_x$ as $\lambda\to\infty$, for each $x\in\Pi_k(A)$.

For each $(\lambda,x)\in\Pi_{1+k}(E)$,
\begin{eqnarray*}
\lambda^{-r} \int_{\Pi_{m-1}(E_{(\lambda,x)})}|g(x,y_{<m})|\, dy_{<m}
    & \leq &
    \int_{\Pi_{m-1}(E_{(\lambda,x)})}|g(x,y_{<m})|
    \left(K^{-1}\int_{A_{(x,y_{<m})}} \psi(y_m)dy_m\right)dy_{<m}
    \\
    & \leq &
    \int_{\Pi_{m-1}(A_x)} |g(x,y_{<m})| \left(\int_{A_{(x,y_{<m})}} |h(x,y)|dy_m\right) dy_{<m}
    \\
    & = &
    \int_{A_x}|f(x,y)|\,dy.
\end{eqnarray*}
By Lemma \ref{lemma:subanalBndInteg} we may fix a subanalytic function $G:\Pi_k(A)\to(0,\infty)$ such that
\[
\int_{A_x}|f(x,y)|\,dy \leq G(x)
\]
for all $x\in\Pi_k(A)$.  Thus
\[
\int_{\Pi_{m-1}(E_{(\lambda,x)})}|g(x,y_{<m})|\, dy_{<m}
\leq
G(x) \lambda^r
\]
for all $(\lambda,x)\in\Pi_{k+1}(E)$.  So as long as $r$ is chosen with $r \leq p$, the bound \eqref{eq:main_gBnd} will be achieved.  To finish, we show how to also achieve \eqref{eq:main_hBnd} by proving that we can obtain bounds of the form $c\lambda^p$ for
\begin{equation}\label{eq:hBnd}
\sup_{y_m\in E_{(\lambda,x,y_{<m})}} |h(x,y)|
\end{equation}
and
\begin{equation}\label{eq:hDerivBnd}
\int_{E_{(\lambda,x,y_{<m})}} \left|\PD{}{h}{y_m}(x,y)\right| dy_m
\end{equation}
by choosing $r$ to be sufficiently small.  The proof divides into three cases. \\

\noindent\emph{Case 1.} $\{y_m : (x,y) \in A\} \subset [e^{-1},e]$.

In this case $h(x,y) = y^{\alpha_m} u(x,y)$, so
\[
|h(x,y)| \leq K y_{m}^{\alpha_m}
\]
and
\begin{eqnarray*}
\left|\PD{}{h}{y_m}(x,y)\right|
    & = &
    \left|\alpha_m y_{m}^{\alpha_m-1} u(x,y) + y^{\alpha_m} \PD{}{u}{y_m}(x,y)\right|
    \\
    & = &
    y_{m}^{\alpha_m-1}\left|\alpha_m u(x,y) + y_m \PD{}{u}{y_m}(x,y)\right|
    \\
    & \leq &
    K(|\alpha_m|+1)y_{m}^{\alpha_m-1}
\end{eqnarray*}
on $A$, which implies that \eqref{eq:hBnd} and \eqref{eq:hDerivBnd} are bounded above by constants.
\\

\noindent\emph{Case 2.} $\{y_m : (x,y) \in A\} \subset (e,\infty)$.

In this case $h(x,y) = y_{m}^{\alpha_m}(\log y_m)^{\beta_m} u(x,y)$ and $\{y_m : (\lambda,x,y)\in E\} \subset (e,\lambda^r)$.  So for all $(\lambda,x,y)\in E$,
\begin{equation}\label{eq:hBnd2}
|h(x,y)| \leq K \lambda^{r|\alpha_m|}(r\log \lambda)^{\beta_m}
\end{equation}
and
\begin{eqnarray*}
\left|\PD{}{h}{y_m}(x,y)\right|
    & = &
    \left|
    \alpha_m y_{m}^{\alpha_m-1} (\log y_m)^{\beta_m} u(x,y)
    +
    \beta_m y_{m}^{\alpha_m-1} (\log y_m)^{\beta_m-1} u(x,y)
    \phantom{\PD{}{u}{y_m}}
    \right.
    \\
    & &
    \left.
    +\,
    y_{m}^{\alpha_m}(\log y_m)^{\beta_m}\PD{}{u}{y_m}(x,y)
    \right|
    \\
    & = &
    y_{m}^{\alpha_m-1} |\log y_m|^{\beta_m-1}
    \left|
    \alpha_m(\log y_m)u(x,y) + \beta_m u(x,y) + y_m(\log y_m)\PD{}{u}{y_m}(x,y)
    \right|
    \\
    & \leq &
    \lambda^{r|\alpha_m-1|}(\log \lambda^r)^{|\beta_m-1|}(K|\alpha_m|\log\lambda^r + K\beta_m + K\log\lambda^r)
    \\
    & = &
    K \lambda^{r|\alpha_m-1|}(r\log\lambda)^{|\beta_m-1|}(r(|\alpha_m|+1)\log\lambda +\beta_m).
\end{eqnarray*}
And, since $(e,\lambda^r)$ has length less than $\lambda^r$,
\begin{equation}\label{eq:hDerivBnd2}
\int_{E_{(\lambda,x,y_{<m})}}\left|\PD{}{h}{y_m}(x,y)\right|dy_m
\leq
K \lambda^{r(|\alpha_m-1|+1)}(r\log\lambda)^{|\beta_m-1|}(r(|\alpha_m|+1)\log\lambda +\beta_m).
\end{equation}
By taking $r$ to be sufficiently small, \eqref{eq:hBnd2} and \eqref{eq:hDerivBnd2} imply that there exists a sufficiently large constant $c> 0$ such that $c\lambda^p$ bounds \eqref{eq:hBnd} and \eqref{eq:hDerivBnd} for all $(\lambda,x,y_{<m})\in \Pi_{k+m}(E)$.
\\

\noindent\emph{Case 3.} $\{y_m : (x,y) \in A\} \subset (0,e^{-1})$.

This case is very similar to Case 2, so we omit the details.
\end{proof}

\section{Proofs of the main results}\label{s:proofs}

This section proves Theorems \ref{thm:decayBnd} and \ref{thm:integFT}.  We begin with two lemmas that come from Chapter VIII of Stein \cite{Stein}. The first one is directly related to the real van der Corput Lemma.

\begin{lemma}\label{lemma:vanderCorput}
Fix $\epsilon > 0$ and a positive integer $d$.  Suppose that $\phi:[a,b]\to\RR$ is a $d$-times differentiable function such that $|\phi^{(d)}(t)| \geq \epsilon$ for all $t\in[a,b]$.  If $d=1$, additionally assume that $\phi$ is twice differentiable and that $\phi'$ is monotonic.  Then for any differentiable function $f:[a,b]\to\RR$,
\[
\left|\int_{a}^{b}f(t) e^{i\lambda\phi(t)} dt\right| \leq
\frac{c_d}{(\lambda\epsilon)^{1/d}}
\left( \min\{|f(a)|, |f(b)|\} + \int_{a}^{b}|f'(t)|dt\right)
\]
for all $\lambda > 0$, where $c_d = 5\cdot2^{d-1}-2$.
\end{lemma}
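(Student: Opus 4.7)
The plan is to prove the lemma by induction on $d$, which is the classical strategy for the van der Corput lemma. The base case $d = 1$ rests on integration by parts, and the inductive step rests on partitioning $[a,b]$ into a ``bad'' interval on which $\phi^{(d-1)}$ is small and ``good'' intervals on which it is bounded below. Throughout, I will abbreviate $\eta = \min\{|f(a)|,|f(b)|\}$, $J = \int_a^b |f'(t)|\,dt$, and $M = \eta + J$, noting that $|f(t)| \le M$ for all $t \in [a,b]$ (which follows from writing $f(t) = f(a) + \int_a^t f'$ and $f(t) = f(b) - \int_t^b f'$ and averaging).

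For the base case, since $|\phi'| \ge \epsilon > 0$ and $\phi'$ is continuous (being differentiable), $\phi'$ has constant sign, so WLOG $\phi' \ge \epsilon$. Integrating by parts using $e^{i\lambda\phi(t)} = \frac{1}{i\lambda\phi'(t)}\frac{d}{dt}e^{i\lambda\phi(t)}$ produces a boundary term plus an interior term involving $f'/\phi'$ and $f\phi''/(\phi')^2$. Monotonicity of $\phi'$ forces $\phi''$ to have constant sign, so $\int_a^b |\phi''|/(\phi')^2 \, dt = |1/\phi'(a) - 1/\phi'(b)| \le 1/\epsilon$. Assembling the boundary term $(|f(a)| + |f(b)|)/(\lambda\epsilon) \le (2\eta + J)/(\lambda\epsilon)$, the $f'$ integral $J/(\lambda\epsilon)$, and the $f\phi''/(\phi')^2$ integral bounded by $M/(\lambda\epsilon)$, yields $3M/(\lambda\epsilon)$, matching $c_1 = 3$.

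For the inductive step, observe that $\phi^{(d)}$ is a Darboux function and $|\phi^{(d)}| \ge \epsilon$, so $\phi^{(d)}$ does not change sign, whence $\phi^{(d-1)}$ is monotonic. For a parameter $\delta > 0$ to be optimized, choose $c \in [a,b]$ where $|\phi^{(d-1)}|$ is smallest, and split $[a,b]$ into the ``bad'' subinterval $I_2 = (c-\delta, c+\delta) \cap [a,b]$ and up to two ``good'' subintervals $I_1, I_3$ on which monotonicity plus $|\phi^{(d)}|\ge \epsilon$ force $|\phi^{(d-1)}| \ge \epsilon\delta$. On $I_2$ I will use the trivial bound $2\delta \cdot M$. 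On each good subinterval I apply the inductive hypothesis with $\epsilon$ replaced by $\epsilon\delta$ and with $d$ replaced by $d-1$; summing, the two ``min'' terms are bounded above by $|f(a)|+|f(b)| \le 2\eta + J$ and the two $|f'|$ integrals add to at most $J$, giving the total $\frac{c_{d-1}(2\eta + 2J)}{(\lambda\epsilon\delta)^{1/(d-1)}} = \frac{2c_{d-1}M}{(\lambda\epsilon\delta)^{1/(d-1)}}$.

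Combining, I get a bound of $M\bigl(\tfrac{2c_{d-1}}{(\lambda\epsilon\delta)^{1/(d-1)}} + 2\delta\bigr)$. Choosing $\delta = (\lambda\epsilon)^{-1/d}$ equalizes the two terms and yields $(2c_{d-1} + 2)M/(\lambda\epsilon)^{1/d}$, producing the recurrence $c_d = 2c_{d-1} + 2$ with $c_1 = 3$, whose solution is exactly $c_d = 5\cdot 2^{d-1} - 2$. The main obstacle is a bookkeeping one: ensuring the argument handles the degenerate configurations cleanly, i.e. when $I_2$ is empty (so $|\phi^{(d-1)}|\ge\epsilon\delta$ on all of $[a,b]$ and one just invokes the inductive hypothesis directly, obtaining a sharper bound) and when $c$ falls at (or close to) an endpoint, so that only one ``good'' subinterval appears. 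In all such subcases the final inequality remains valid with the same constant $c_d$.
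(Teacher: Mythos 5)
Your proof is correct, but it takes a genuinely different route from the paper, which does not prove the lemma at all: it simply cites Proposition 2 of Chapter VIII of Stein's book together with the corollary on page 334, after rescaling the phase to $\epsilon^{-1}\phi$ and symmetrizing the roles of the endpoints. Your argument is therefore a self-contained replacement for that citation. It also differs from Stein's own proof in an interesting way: Stein first proves the amplitude-free estimate $\bigl|\int_a^b e^{i\lambda\phi}\bigr|\leq c_d(\lambda\epsilon)^{-1/d}$ by the induction on $d$ (splitting off the interval where $|\phi^{(d-1)}|<\epsilon\delta$ and optimizing $\delta$), and only afterwards introduces the amplitude by a single integration by parts against the antiderivative $F(t)=\int_a^t e^{i\lambda\phi}$, using $|F(t)|\leq c_d(\lambda\epsilon)^{-1/d}$. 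You instead carry the amplitude $f$ through the induction itself, absorbing the boundary and good-interval contributions into $M=\min\{|f(a)|,|f(b)|\}+\int_a^b|f'|$ via $|f(t)|\leq M$ and $|f(a)|+|f(b)|\leq 2\min\{|f(a)|,|f(b)|\}+\int_a^b|f'|$; remarkably this yields the identical recurrence $c_d=2c_{d-1}+2$, $c_1=3$, and hence exactly the constant $c_d=5\cdot 2^{d-1}-2$ of the statement. The details all check: the base case uses monotonicity of $\phi'$ to get $\int_a^b|\phi''|/(\phi')^2\leq 1/\epsilon$; Darboux's theorem gives constant sign of $\phi^{(d)}$ and hence monotonicity of $\phi^{(d-1)}$ (which also supplies the extra hypotheses needed to invoke the case $d-1=1$); and the choice $\delta=(\lambda\epsilon)^{-1/d}$ balances the two terms. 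One cosmetic slip: the bound $|f(t)|\leq M$ follows by taking the \emph{minimum} of the two estimates $|f(a)|+\int_a^b|f'|$ and $|f(b)|+\int_a^b|f'|$, not by averaging them (averaging only gives $\tfrac12(|f(a)|+|f(b)|)+\int_a^b|f'|$, which can exceed $M$); this does not affect the rest of the argument. Stein's two-step route has the advantage that the amplitude-free estimate is available as a separate statement; yours has the advantage of being a single uniform induction delivering the symmetric $\min\{|f(a)|,|f(b)|\}$ form directly.
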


\begin{proof}
Stein gives the van der Corput Lemma in \cite[Proposition 2 of Chapter VIII]{Stein} and then gives a corollary of this proposition directly thereafter on page 334.  The statement we are proving follows by applying this corollary to the amplitude function $f$ and phase function $\epsilon^{-1}\phi$, with the understanding that here we have simply noted the smoothness conditions on $\phi$ required for the proof of \cite[Proposition 2 of Chapter VIII]{Stein} to go through, and we have also used the inherent symmetry of the role of the endpoints $a$ and $b$ in this corollary.
\end{proof}

\begin{lemma}\label{lemma:homogBasis}
For any positive integer $d$, there exist unit vectors $v_1,\ldots,v_\ell$ in $\RR^m$ such that $\{(v_1\cdot y)^d,\ldots
(v_\ell\cdot y)^d\}$ is a basis for the the real vector space of all homogeneous polynomials in $y$ of degree $d$.
\end{lemma}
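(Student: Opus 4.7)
The plan is to prove the lemma in three steps: span, extraction of a basis, and normalization.

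First, let $V_d$ denote the real vector space of homogeneous polynomials of degree $d$ in $y=(y_1,\ldots,y_m)$, which has dimension $\ell=\binom{m+d-1}{d}$. I would consider the map $\Phi:\RR^m\to V_d$ sending $v\mapsto (v\cdot y)^d$ and aim to show that its image spans $V_d$. Expanding by the multinomial theorem,
\[
(v\cdot y)^d = \sum_{|\alpha|=d}\binom{d}{\alpha} v^\alpha y^\alpha,
\]
where $\binom{d}{\alpha}=d!/(\alpha_1!\cdots\alpha_m!)$. Let $W=\Span\{(v\cdot y)^d : v\in\RR^m\}\subset V_d$. If $W\neq V_d$, pick a nonzero linear functional $L\in V_{d}^{*}$ vanishing on $W$, and write $c_\alpha=L(y^\alpha)$. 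Then $L((v\cdot y)^d)=\sum_{|\alpha|=d}\binom{d}{\alpha}c_\alpha v^\alpha$, and this polynomial in $v$ must vanish identically on $\RR^m$. Since the monomials $v^\alpha$ of fixed degree $d$ are linearly independent as polynomials, each $c_\alpha=0$, contradicting $L\neq 0$. Thus $W=V_d$.

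Next, since the $d$-th powers span $V_d$, standard linear algebra lets me extract a subset $v_1,\ldots,v_\ell\in\RR^m$ for which $(v_1\cdot y)^d,\ldots,(v_\ell\cdot y)^d$ are linearly independent and hence form a basis of $V_d$. Each such $v_i$ is necessarily nonzero (otherwise the corresponding polynomial would be $0$). Finally, I normalize: replacing $v_i$ by $v_i/\|v_i\|$ rescales $(v_i\cdot y)^d$ by the nonzero factor $\|v_i\|^{-d}$, preserving linear independence, so the normalized vectors yield a basis of $V_d$ consisting of $d$-th powers of linear forms attached to \emph{unit} vectors.

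The only real content is the spanning step; once that is established, the extraction and normalization are immediate. The potential obstacle is making sure that the argument identifying $L$ with the polynomial $\sum\binom{d}{\alpha}c_\alpha v^\alpha$ is watertight, but this reduces to the trivial fact that a polynomial on $\RR^m$ vanishing everywhere has all coefficients zero, so no difficulty arises.
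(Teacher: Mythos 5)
Your proof is correct and complete. The paper itself gives no argument for this lemma, only a citation to Stein (Chapter VIII, Subsection 2.2.1), and the argument found there is essentially the same duality argument you give: a linear functional annihilating all $d$-th powers $(v\cdot y)^d$ corresponds, via the multinomial expansion, to a polynomial in $v$ vanishing identically, hence is zero. Your extraction-of-a-basis and normalization steps are routine and handled correctly (in particular you rightly note each $v_i\neq 0$ and that rescaling by $\|v_i\|^{-d}$ preserves linear independence), so nothing is missing.
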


\begin{proof}
See \cite[Subsection 2.2.1 of Chapter VIII on page 343]{Stein}.
\end{proof}

\begin{proof}[Proof of Theorem \ref{thm:decayBnd}]
Define $F:X\times S\times(0,\infty)\to\CC$ as in \eqref{eq:Fpolar}, where $f\in\C(X\times\RR^m)$ is such that $f(x,\cdot)$ is integrable on $\RR^m$ for each $x\in X$, and $\phi:X\times\RR^m\to\RR$ is a subanalytic function satisfying the hyperplane condition over $X$.  It suffices to show that $F$ satisfies the hypothesis of Lemma \ref{lemma:decayBoundLarge}.  Begin by fixing a subanalytic set $U\subset X\times\RR^m$ such that for each $x\in X$, the fiber $U_x$ is open in $\RR^m$, $\RR^m\setminus U_x$ has measure zero, and $\phi(x,\cdot)$ restricts to an analytic function on $U_x$.  For each $(x_0,y_0)\in U$ and $\xi\in S$, the hyperplane condition implies that the analytic function $y\mapsto \xi\cdot(\phi(x_0,y) - \phi(x_0,y_0))$ on $U_{x_0}$ does not vanish identically in a neighborhood of $y_0$, so there exists a nonzero $\alpha\in\NN^m$ such that $\xi\cdot \PDn{\alpha}{\phi}{y}(x_0,y_0)\neq 0$.  Because $(x,y,\xi)\mapsto \xi\cdot\phi(x,y)$ is a subanalytic function on $U\times S$ and the subanalytic sets form a polynomially bounded o-minimal structure, Chris Miller's main theorem from \cite{CM95} implies that there exists a positive integer $N$ such that for each $(x,y,\xi)\in U\times S$ there exists a nonzero $\alpha\in\NN^m$ such that $|\alpha|\leq N$ and $\xi\cdot\PDn{\alpha}{\phi}{y}(x,y)\neq 0$.

We exploit the existence of this $N$ with a reasoning that is similar up to equation (\ref{eq:direcDeriv}) to an argument in the proof of Proposition 5 of Chapter VIII of \cite{Stein} (which is a key part of the proof of Theorem 2 of Chapter VIII of \cite{Stein}).  For each $d\in\{1,\ldots,N\}$, Lemma \ref{lemma:homogBasis} shows that we may fix unit vectors $v_{d,1},\ldots,v_{d,\ell(d)}$ in $\RR^m$ such that $\{(v_{d,1}\cdot y)^d, \ldots, (v_{d,\ell(d)}\cdot y)^d\}$ forms a basis for the real vector space of all homogeneous polynomials of degree $d$ in $y$.  Therefore for each nonzero $\alpha\in\NN^m$ with $|\alpha| \leq N$, we may write
\[
y^\alpha = \sum_{j=1}^{\ell(|\alpha|)} c_{\alpha,j} (v_{|\alpha|,j}\cdot y)^{|\alpha|}
\]
for unique $c_{\alpha,j}\in\RR$.  It follows that
\[
\PDn{\alpha}{\phi}{y}(x,y) =  \sum_{j=1}^{\ell(|\alpha|)} c_{\alpha,j} (v_{|\alpha|,j}\cdot \nabla)^{|\alpha|}(\phi)(x,y)
\]
on $U$, where $\nabla$ is the gradient operator $(\PD{}{}{y_1},\ldots,\PD{}{}{y_m})$.  Writing
\[
\Gamma = \{(d,j) : \text{$d\in\{1,\ldots,N\}$ and  $j \in\{1,\ldots,\ell(d)\}$}\},
\]
we see that for each $(x,y,\xi)\in U\times S$ there exists $(d,j)\in\Gamma$ such that
\begin{equation}\label{eq:direcDeriv}
\xi\cdot(v_{d,j}\cdot\nabla)^{d}(\phi)(x,y) \neq 0.
\end{equation}
Therefore we can define a positively-valued subanalytic function $M$ on $U$ by
\[
M(x,y) = \min_{\xi\in S} \left(\max_{(d,j)\in\Gamma} |\xi\cdot(v_{d,j}\cdot\nabla)^{d}(\phi)(x,y)|\right).
\]
We will use the function $M$ later in the proof.

For each $(d,j)\in\Gamma$, choose an orthonormal basis $\{v_{(d,j)}^{[1]},\ldots,v_{(d,j)}^{[m]}\}$ of $\RR^m$ with $v_{(d,j)}^{[m]} = v_{(d,j)}$.  Define $\tld{T}_{(d,j)}:\RR^m\to\RR^m$ and $T_{(d,j)}:\RR^{k+m}\to \RR^{k+m}$ by
\begin{eqnarray*}
\tld{T}_{(d,j)}(y)
    & = &
    v_{(d,j)}^{[1]}y_1 + \cdots + v_{(d,j)}^{[m]}y_m,
    \\
T_{(d,j)}(x,y)
    & = &
    (x,\tld{T}_{(d,j)}(y)),
\end{eqnarray*}
and put $U^{[d,j]} = T_{(d,j)}^{-1}(U)$.

For each $(d,j)\in\Gamma$, apply Proposition \ref{prop:main} to $f\circ T_{(d,j)}$ on $U^{[d,j]}$ with $p = \frac{1}{4N}$.  This constructs a constant $c^{[d,j]} > 0$, a subanalytic function $G^{[d,j]}:X\to(0,\infty)$ and a subanalytic set $E^{[d,j]} \subset(1,\infty)\times U^{[d,j]}$ such that for each $x\in X$,
\[
\text{$E^{[d,j]}_{(\lambda,x)} \nearrow U_{x}^{[d,j]}$ a.e. as $\lambda\to\infty$,}
\]
and such that we can write
\[
f\circ T^{[d,j]}(x,y) = \sum_{l\in L} g_l(x,y_{<m}) h_l(x,y)
\]
on $U^{[d,j]}$ for some $g_l\in\C(\Pi_{k+m-1}(U^{[d,j]}))$ and $h_l\in\C(U^{[d,j]})$ such that for each $(\lambda,x)\in \Pi_{1+k}(E^{[d,j]})$,
\begin{equation}\label{eq:dj_gBnd}
\int_{\Pi_{m-1}(E^{[d,j]}_{(\lambda,x)})} |g_l(x,y_{<m})|\,dy_{<m} \leq G^{[d,j]}(x) \lambda^{1/(4N)},
\end{equation}
and such that for each $(\lambda,x,y_{<m})\in\Pi_{k+m}(E^{[d,j]})$,
\begin{equation}\label{eq:dj_hBnd}
\left(\sup_{y_m \in E^{[d,j]}_{(\lambda,x,y_{<m})}} |h_l(x,y)|\right)
+
\int_{E^{[d,j]}_{(\lambda,x,y_{<m})}}\left|\PD{}{h_l}{y_m}(x,y)\right| dy_m
\leq
c^{[d,j]}\lambda^{1/(4N)}.
\end{equation}

Define
\[
E = \bigcap_{(d,j)\in\Gamma} \left\{(\lambda,x,y) \in(1,\infty)\times U : \text{$(\lambda,T_{(d,j)}^{-1}(x,y)) \in E^{[d,j]}$ and $M(x,y) > \lambda^{-1/4}$}\right\}.
\]
Note that $E$ is subanalytic and that for each $x\in X$, $E_{(\lambda,x)}\nearrow U_x$ a.e. as $\lambda\to\infty$.  Therefore Lemma \ref{lemma:paramIntegDecay} shows that there exist a constant $p>0$ and a subanalytic function $g:X\to(0,\infty)$ such that
\[
\left|
\int_{U_x \setminus E_{(\lambda,x)}} f(x,y) e^{i\lambda \xi\cdot\phi(x,y)} dy
\right|
\leq
\int_{U_x \setminus E_{(\lambda,x)}} |f(x,y)|\,dy
\leq
\lambda^{-p}
\]
for all $x\in X$, $\lambda > g(x)$ and $\xi\in S$.

To finish it suffices to find a suitable bound for
\[
\left|\int_{E_{(\lambda,x)}} f(x,y) e^{i\lambda \xi\cdot\phi(x,y)} dy
\right|
\]
for all $(x,\xi,\lambda)\in X\times S\times (1,\infty)$.  We do this by an argument that combines the above information with a method used to prove Proposition 5 of Chapter VIII of \cite{Stein}.  Fix $(x,\xi,\lambda)\in X\times S\times (1,\infty)$.  It follows from the definitions of $M$ and $E$ that we can fix a finite partition $\A$ of the fiber $E_{(\lambda,x)}$ into subanalytic sets such that for each $A\in\A$ there exists $(d,j)\in\Gamma$ such that $\tld{A} := \tld{T}_{(d,j)}^{-1}(A)$ is a cell over $\RR^0$ contained in $E^{[d,j]}_{(\lambda,x)}$, and
\[
\left|\xi\cdot(v_{d,j} \cdot \nabla)^d(\phi)(x,y)\right| > \lambda^{-1/4}
\]
for all $y\in A$.  Thus for $\tld{\phi} := \phi\circ T_{(d,j)}$,
\[
\left|\xi\cdot\PD{d}{\tld{\phi}}{y_m}(x,y)\right| > \lambda^{-1/4}
\]
for all $y\in\tld{A}$.  Write
\[
\int_{E_{(\lambda,x)}} f(x,y) e^{i\lambda \xi\cdot\phi(x,y)} dy
=
\sum_{A\in\A} \int_A f(x,y) e^{i\lambda \xi\cdot\phi(x,y)} dy.
\]
Fix $A\in\A$, choose $(d,j)\in\Gamma$ as described above for the set $A$, and write $\tld{f} = f\circ T_{(d,j)}$ and also $\tld{\phi} = \phi\circ T_{(d,j)}$ and $\tld{A} = \tld{T}^{-1}_{(d,j)}(A)$, as above.  Then
\begin{eqnarray*}
\left|\int_A f(x,y) e^{i\lambda \xi\cdot\phi(x,y)} dy\right|
    & = &
    \left|\int_{\tld{A}} \tld{f}(x,y) e^{i\lambda \xi\cdot\tld{\phi}(x,y)} dy \right|
    \\
    & \leq &
    \sum_{l\in L} \int_{\Pi_{m-1}(\tld{A})}|g_l(x,y_{<m})|\left|\int_{\tld{A}_{y_{<m}}} h_l(x,y) e^{i\lambda \xi\cdot\tld{\phi}(x,y)} dy_m\right| dy_{<m}.
\end{eqnarray*}
Also fix $l\in L$.  By the van der Corput style Lemma \ref{lemma:vanderCorput},
\[
\left|\int_{\tld{A}_{y_{<m}}} h_l(x,y) e^{i\lambda \xi\cdot\tld{\phi}(x,y)} dy_m\right|
\leq
\frac{c_N}{(\lambda\cdot\lambda^{-1/4})^{1/N}}
    \left(
    \sup_{y_m\in B_{y_{<m}}}|h_l(x,y)| + \int_{B_{y_{<m}}}\left|\PD{}{h_l}{y_m}(x,y)\right|dy_m,
    \right)
\]
for some constant $c_N$ only depending on $N$.
So, by \eqref{eq:dj_gBnd} and \eqref{eq:dj_hBnd},
\[
\left|\int_A f(x,y) e^{i\lambda \xi\cdot\phi(x,y)} dy\right|
    \leq
    |L|\frac{c_N G^{[d,j]}(x) \lambda^{1/(4N)} c^{[d,j]}\lambda^{1/(4N)}}{\lambda^{3/(4N)}}
    =
    \frac{|L|c_N c^{[d,j]}  G^{[d,j]}   (x)}{\lambda^{1/(4N)}},
\]
where $|L|$ denotes the cardinality of the finite index set $L$.  The theorem follows.
\end{proof}

Theorem \ref{thm:integFT} will be proven as a simple consequence of Theorem \ref{thm:decayBnd}, some standard facts about o-minimal structures, and the following elementary lemma about the Fundamental Theorem of Calculus and integration by parts.

\begin{lemma}\label{lemma:FTC&Parts}
Let $-\infty\leq a < b\leq+\infty$.  Suppose that $f:(a,b)\to\RR$ is differentiable, that the one-sided limits $f(a^+)$ and $f(b^-)$ exist in $\RR$, and that $f'$ has constant sign (i.e., either $-$, $0$, or $+$) on $(a,b)$.
\begin{enumerate}
\item
Then $f'$ is Lebesgue integrable on $(a,b)$, and
\[
\int_{a}^{b}f'(t)dt = f(b^-) - f(a^+).
\]

\item
Additionally suppose that $f$ is Lebesgue integrable on $(a,b)$ and that $g:(a,b)\to\CC$ is a differentiable function such that $g$ and $g'$ are bounded and such that the one-sided limits $g(a^+)$ and $g(b^-)$ exist in $\RR$.  Then $fg'$ and $f'g$ are Lebesgue integrable on $(a,b)$, and
\[
\int_{a}^{b}f(t)g'(t)dt = \left(f(b^-)g(b^-) - f(a^+)g(a^+)\right) - \int_{a}^{b}f'(t)g(t)dt.
\]
\end{enumerate}
\end{lemma}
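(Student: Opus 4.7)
The plan is to reduce both parts of the lemma to compact subintervals $[\alpha,\beta]\subset(a,b)$, where classical versions of the FTC and integration by parts apply, and then pass to the limit as $\alpha\to a^+$ and $\beta\to b^-$. The crucial consequence of the sign hypothesis in part (1) is that $f$ is monotone on $(a,b)$ (say nondecreasing if $f'\geq 0$; the other sign is symmetric and $f'\equiv 0$ is trivial), so the existence of the one-sided limits $f(a^+)$ and $f(b^-)$ yields a finite total-variation bound for $f$ on $(a,b)$.

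For part (1), I assume $f'\geq 0$ and fix $[\alpha,\beta]\subset(a,b)$. I would first establish that $f'\in L^1([\alpha,\beta])$ via Fatou's lemma applied to the nonnegative difference quotients $g_n(t)=n(f(t+1/n)-f(t))$, which tend pointwise to $f'$ and satisfy
\[
\int_\alpha^\beta g_n(t)\,dt = n\int_\beta^{\beta+1/n} f(t)\,dt - n\int_\alpha^{\alpha+1/n} f(t)\,dt \longrightarrow f(\beta)-f(\alpha)
\]
by continuity of $f$. This yields $\int_\alpha^\beta f'\,dt \leq f(\beta)-f(\alpha)<\infty$. The corresponding equality then follows from the classical Lebesgue-version FTC: if a function is everywhere differentiable on a compact interval with Lebesgue-integrable derivative, then it is absolutely continuous. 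Letting $\alpha\to a^+$ and $\beta\to b^-$ (along monotone sequences, in case $a$ or $b$ is infinite) and invoking the monotone convergence theorem produces $\int_a^b f'\,dt = f(b^-)-f(a^+)$.

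For part (2), part (1) guarantees $f'\in L^1((a,b))$, and combined with the boundedness of $g$ and $g'$, the pointwise bounds $|f'g|\leq\|g\|_\infty|f'|$ and $|fg'|\leq\|g'\|_\infty|f|$ exhibit Lebesgue-integrable dominants for $f'g$ and $fg'$. On each compact $[\alpha,\beta]\subset(a,b)$, $f$ is absolutely continuous by part (1), while $g$ is Lipschitz and hence absolutely continuous because $g'$ is bounded; so the standard integration-by-parts formula
\[
\int_\alpha^\beta f(t)g'(t)\,dt = f(\beta)g(\beta)-f(\alpha)g(\alpha) - \int_\alpha^\beta f'(t)g(t)\,dt
\]
is immediate (applied separately to the real and imaginary parts of $g$). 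Dominated convergence with the dominants above sends both integrals to their counterparts over $(a,b)$ as $\alpha\to a^+$ and $\beta\to b^-$, while the boundary term converges to $f(b^-)g(b^-)-f(a^+)g(a^+)$ by the assumed one-sided limits.

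The main obstacle is the upgrade from the inequality $\int_\alpha^\beta f'\leq f(\beta)-f(\alpha)$ to equality in part (1). Since $f$ is only assumed differentiable (not $C^1$), its derivative need not be Riemann integrable, so the elementary FTC does not apply directly; one must appeal to Lebesgue's theorem that everywhere differentiability together with $L^1$-integrability of the derivative forces absolute continuity. Once this is in hand, all remaining steps are routine applications of MCT, DCT, and continuity at the endpoints.
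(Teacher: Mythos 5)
Your proof is correct and follows essentially the same route as the paper: exhaust $(a,b)$ by compact subintervals, apply the fundamental theorem of calculus and integration by parts there, and pass to the limit by the monotone and dominated convergence theorems. You are in fact more careful than the paper on the one delicate point, namely justifying $\int_\alpha^\beta f'(t)\,dt = f(\beta)-f(\alpha)$ for a merely everywhere-differentiable $f$, which you do via Fatou's lemma on the difference quotients together with Lebesgue's theorem that everywhere differentiability plus an integrable derivative forces absolute continuity.
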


The assumption in the lemma that $f'$ has constant sign can be significantly weakened, but we make this assumption because it is very simple and is sufficient for our needs.

\begin{proof}[Proof of Lemma \ref{lemma:FTC&Parts}]
Fix an increasing sequence of intervals $[a_j,b_j]$ converging to $[a,b]$.  For the first statement, since $f'$ has constant sign, Lebesgue's monotone convergence theorem implies that
\[
\int_{a}^{b} f'(t)dt = \lim_{j\to\infty} \int_{a}^{b}f'(t)\chi_{[a_j,b_j]}(t)dt = \lim_{j\to\infty}(f(b_j)-f(a_j)) = f(b^-)-f(a^+).
\]
For the second statement, the assumptions on $f$ and $g$ imply that $f'g$ and $fg'$ are Lebesgue integrable, so Lebesgue's dominated convergence theorem implies that
\[
\int_{a}^{b}(f(t)g'(t)+f'(t)g(t))dt
=
\lim_{j\to\infty}\int_{a}^{b}(fg)'(t) \chi_{[a_j,b_j]}(t) dt
=
f(b^-)g(b^-) - f(a^+)g(a^+).
\]
\end{proof}

\begin{proof}[Proof of Theorem \ref{thm:integFT}]
Assume that $f\in\C(\RR)$ is continuous and integrable, and let $y$ and $z$ denote single variables.  Because the Fourier transform $\hat{f}$ is continuous, to show that $\hat{f}$ is integrable on $\RR$, it suffices to bound $|\hat{f}|$ by a function that is integrable on the complement of some compact set.

Since $f$ is constructible, $f$ is definable in the expansion of the real field by all restricted analytic functions and the exponential function, which is o-minimal (see Van den Dries, Macintyre and Marker \cite{vdDMM}, or Lion and Rolin \cite{LR97}).  By o-minimality we may fix finitely many points $-\infty = a_0 < a_1 < \cdots < a_n < a_{n+1} = +\infty$ such that $f'$ is defined and has constant sign on  $(a_j,a_{j+1})$ for each $j\in\{0,\ldots,n\}$.  Because $f$ is piecewise monotonic and integrable, $f(y)\to 0$ as $|y|\to \infty$.  Since the function $y\mapsto e^{-iyz}$ and  its derivative $y\mapsto -ize^{-iyz}$ are bounded on $\RR$ for each fixed value of $z\in\RR$, Statement 2 of Lemma \ref{lemma:FTC&Parts} gives
\begin{eqnarray*}
\sqrt{2\pi} \, iz \hat{f}(z)
    & = &
    \int_{\RR} f(y) (iz)e^{-i yz}dy
    \\
    & = &
    \sum_{j=0}^{n} \int_{a_j}^{a_{j+1}} f(y)(iz)e^{-iyz} dy
    \\
    & = &
    -\sum_{j=0}^{n} \left(\left[f(y)e^{-iyz}\right]_{y=a_j}^{y=a_{j+1}}
    - \int_{a_j}^{a_{j+1}} f'(y) e^{-iyz}dy\right)
    \\
    & = &
    \int_{\RR}f'(y)e^{-iyz}dy,
\end{eqnarray*}
with the last equality following from the continuity of $f$ and the fact that $f(y)\to 0$ as $|y|\to\infty$.  Statement 1 of Lemma \ref{lemma:FTC&Parts} shows that $f'$ is integrable on $(a_j,a_{j+1})$ for each $j$, and is therefore integrable on $\RR$.  So Theorem \ref{thm:decayBnd} shows that there exist positive constants $c$ and $p$ such that
\[
\left|\int_{\RR}f'(y)e^{-iyz}dy\right|
\leq
c|z|^{-p}
\]
for all $z\neq 0$.  Thus
\[
\sqrt{2\pi}|\hat{f}(z)| \leq c|z|^{-(1+p)}
\]
for all $z\neq 0$, which implies that $\hat{f}$ is integrable on $\RR$.
\end{proof}

We now conclude the paper with two remarks that recast Theorems \ref{thm:decayBnd} and \ref{thm:integFT} in forms that explain in what sense the hypotheses of these theorems are, in fact, necessary. The arguments proving the remarks are well-known, but we give them in detail for the convenience of the reader and by way of giving context.

\begin{remark}\label{rmk:decayBnd}
A subanalytic function $\phi:X\times\RR^m\to\RR^n$ satisfies the hyperplane condition over $X$ if and only if the following statement holds.
\begin{equation}\label{eq:decayBndIFF}
\left\{\hspace*{5pt}
\text{\parbox{5.5in}{
For every $f\in\C(X\times\RR^m)$ such that $f(x,\cdot)$ is integrable on $\RR^m$ for all $x\in X$, there exist a constant $p >0$ and a subanalytic function $g:X\to(0,\infty)$ such that
\[
|F(x,z)| \leq g(x)\|z\|^{-p}
\]
for all $(x,z)\in X\times(\RR^m\setminus\{0\})$, where $F$ is defined from $f$ and $\phi$ as in \eqref{eq:oscInt}.
}}
\right.
\end{equation}
\end{remark}

\begin{proof}[Proof of Remark \ref{rmk:decayBnd}]
The fact that the hyperplane condition implies \eqref{eq:decayBndIFF} is Theorem \ref{thm:decayBnd}.  To prove the converse, suppose that $\phi$ does not satisfy the hyperplane condition over $X$.  Fix $x\in X$ and a hyperplane $H = \{z\in\RR^n : a\cdot z = b\}$, with $a\in\RR^n\setminus\{0\}$ and $b\in\RR$, such that
\begin{equation}\label{eq:H+meas}
\{y\in\RR^m : \phi(x,y)\in H\}
\end{equation}
has positive measure.  The set \eqref{eq:H+meas} is subanalytic, so \eqref{eq:H+meas} must contain a subanalytic set $U$ that is open in $\RR^m$.  Let $f:X\times\RR^m\to\RR$ be the characteristic function of $\{x\}\times U$, which is subanalytic, so in particular is constructible.  Then for all $\lambda\in\RR$,
\[
|F(x,\lambda a)| = \left|\int_{\RR^m} f(x,y) e^{i\lambda a\cdot\phi(x,y)}dy\right|
= \left|\int_{U}e^{i\lambda b} dy\right| = \vol_m(U).
\]
So $|F(x,\lambda a)|$ does not tend to $0$ as $\lambda\to +\infty$, which shows that \eqref{eq:decayBndIFF} cannot hold.
\end{proof}

\begin{remark}\label{rmk:integFT}
Let $f:\RR\to\CC$ be integrable, and suppose that the real and imaginary components of $f$ are constructible.  Then the Fourier transform of $f$ is integrable if and only if $f$ is equivalent almost everywhere to a continuous function.
\end{remark}

\begin{proof}[Proof of Remark \ref{rmk:integFT}]
First suppose that $\hat{f}$ is integrable.  Then $\check{\hat{f}}$ is a continuous function, and the Fourier inversion theorem shows that $f(y) = \check{\hat{f}}(y)$ for almost all $y\in\RR$.

Conversely, suppose that there is a continuous function $g:\RR\to\CC$ such that $f(y) = g(y)$ for almost all $y\in\RR$.  Since the real and imaginary components of $f$ are constructible, $f = g$ at all but finitely many points, so the real and imaginary components of $g$ are also constructible.  Therefore applying Theorem \ref{thm:integFT} to the real and imaginary parts of $g$ shows that $\hat{g}$ is integrable, so $\hat{f}$ is as well because $\hat{f} = \hat{g}$.
\end{proof}

\bibliographystyle{amsplain}
\bibliography{bibliotex}

\providecommand{\bysame}{\leavevmode\hbox to3em{\hrulefill}\thinspace}
\providecommand{\MR}{\relax\ifhmode\unskip\space\fi MR }
\providecommand{\MRhref}[2]{%
  \href{http://www.ams.org/mathscinet-getitem?mr=#1}{#2}
}
\providecommand{\href}[2]{#2}
\begin{thebibliography}{10}

\bibitem{AGZV88}
V.~I. Arnold, S.~M. Gusein-Zade, and A.~N. Varchenko, \emph{Singularities of
  differentiable maps. {V}ol. {II}}, Monographs in Mathematics, vol.~83,
  Birkh\"auser Boston Inc., Boston, MA, 1988, Monodromy and asymptotics of
  integrals, Translated from the Russian by Hugh Porteous, Translation revised
  by the authors and James Montaldi.

\bibitem{Bjork1973}
J.~E. Bj{\"{o}}rk, \emph{On {F}ourier transforms of smooth measures carried by
  real-analytic submanifolds of $\mathbb{R}^n$}, preprint 1973.

\bibitem{Cluckers2004}
R.~Cluckers, \emph{Multivariate {I}gusa theory: decay rates of exponential
  sums}, Int. Math. Res. Not. \textbf{76} (2004), 4093--4108.

\bibitem{CCorput}
\bysame, \emph{Analytic van der {C}orput lemma for $p$-adic and
  $\mathbf{F}_q((t))$ oscillatory integrals, singular {F}ourier transforms, and
  restriction theorems}, Expositiones Mathematicae \textbf{29} (2011), no.~4,
  371--386, 

\bibitem{CLexp}
R.~Cluckers and F.~Loeser, \emph{Constructible exponential functions, motivic
  {F}ourier transform and transfer principle}, Annals of Mathematics
  \textbf{171} (2010), 1011--1065.

\bibitem{CluckersMiller01}
R.~Cluckers and D.~J. Miller, \emph{Stability under integration of sums of
  products of real globally subanalytic functions and their logarithms}, Duke
  Math. J. \textbf{156} (2011), no.~2, 311--347.

\bibitem{CluckersMiller02}
\bysame, \emph{Loci of integrability, zero loci, and stability under
  integration for constructible functions on {E}uclidean space with {L}ebesgue
  measure}, Int. Math. Res. Not. IMRN (2012), no.~14, 3182–3191.

\bibitem{CluckersMiller03}
\bysame, \emph{Lebesgue classes and preparation of real constructible
  functions}, J. Funct. Anal. \textbf{264} (2013), no.~7, 1599--1642.

\bibitem{Denef1991}
J.~Denef, \emph{Report on {I}gusa's local zeta function}, S{\'{e}}minaire
  Bourbaki {\bf Vol. 1990/91, Exp. No. 730-744} (1991), 359--386,
  Ast{\'{e}}risque 201-203.

\bibitem{Igusa3}
J.~Igusa, \emph{Lectures on forms of higher degree (notes by {S}. {R}aghavan)},
  Lectures on mathematics and physics, Tata institute of fundamental research,
  vol.~59, Springer-Verlag, 1978.

\bibitem{Lichtin}
B.~Lichtin, \emph{On a question of {I}gusa: toward a theory of several variable
  asymptotic expansions ii}, preprint.

\bibitem{LR97}
J.-M. Lion and J.-P. Rolin, \emph{Th\'{e}or\`{e}me de pr\'{e}paration pour les
  fonctions logarithmico-exponentielles}, Ann. Inst. Fourier (Grenoble)
  \textbf{47} (1997), 859--884 (French).

\bibitem{CM95}
C.~Miller, \emph{Infinite differentiability in polynomially bounded o-minimal
  structures}, Proc. Amer. Math. Soc. \textbf{123} (1995), 2551--2555.

\bibitem{Stein}
E.~M. Stein, \emph{Harmonic analysis: real-variable methods, orthogonality, and
  oscillatory integrals}, Princeton Mathematical Series, vol.~43, Princeton
  University Press, Princeton, NJ, 1993, With the assistance of Timothy S.
  Murphy, Monographs in Harmonic Analysis, III.

\bibitem{vdDMM}
L.~van~den Dries, A.~Macintyre, and D.~Marker, \emph{The elementary theory of
  restricted analytic fields with exponentiation}, Annals of Math. \textbf{140}
  (1994), 183--205.

\end{thebibliography}
\end{document}